\newcommand{\C}{\mathbb C}
\newcommand{\Z}{\mathbb Z}
\newcommand{\Q}{\mathbb Q}
\DeclareMathOperator{\pic}{Pic}
\DeclareMathOperator{\qhd}{\Q HD}
\newtheorem{theorem}{Theorem}[section]
\newtheorem{proposition}[theorem]{Proposition}
\newtheorem{lemma}[theorem]{Lemma}
\newtheorem{corollary}[theorem]{Corollary}
\newtheorem{theorem0}{Theorem}
\theoremstyle{definition}
\newtheorem{example0}[theorem0]{Example}
\newtheorem*{ack}{Acknowledgments}
\theoremstyle{remark}
\newtheorem{remark}[theorem]{Remark}
\numberwithin{equation}{section}
\begin{document}
\title[
Fundamental group of rational homology disk smoothings]
{
Fundamental group of rational homology disk smoothings of surface singularities}

\author[E. Artal]{Enrique Artal Bartolo}
\address{Departamento de Matem\'aticas-IUMA, Facultad de Ciencias,
Universidad de Zara\-goza,
c/ Pedro Cerbuna 12\\
E-50009 Zaragoza SPAIN}
\email{artal@unizar.es}

\author[J. Wahl]{Jonathan Wahl}
\address{Department of Mathematics\\The University of
North Carolina\\Chapel Hill, NC 27599-3250}
\email{jmwahl@email.unc.edu}

\thanks{First named author is partially supported by 
MCIN/AEI/10.13039/501100011033
(MTM2016-76868-C2-2-P, PID2020-114750GB-C31) and by Departamento de Ciencia, Universidad y Sociedad del Conocimiento of the
Gobierno de Arag{\'o}n (E22\_20R: ``{\'A}lgebra y Geometr{\'i}a'').}

\keywords{surface singularity,
rational homology sphere, Milnor fibre}
\subjclass{14H20, 32S50, 57M05}

\begin{abstract}  It is known (\cite{SSW}) that there are exactly three triply-infinite and seven singly-infinite families of weighted homogeneous normal surface singularities admitting a rational homology disk ($\qhd$) smoothing, i.e., having a Milnor fibre with Milnor number zero.  Some examples are found by an explicit ``quotient construction'', while others require the ``Pinkham method''.  The  fundamental group of the Milnor fibre has been known for all except the three exceptional families $\mathcal B_2^3(p), \mathcal C^3_2(p),$ and $\mathcal C^3_3(p)$.  In this paper, we settle these cases.  We present a new explicit construction for the $\mathcal B_2^3(p)$ family, showing the fundamental group is non-abelian (as occurred previously only for the $\mathcal A^4(p), \mathcal B^4(p)$ and $\mathcal C^4(p)$ cases).  We show that the fundamental groups for $ \mathcal C^3_2(p)$ and $\mathcal C^3_3(p)$ are abelian, hence easily computed; using the Pinkham method here requires precise calculations for the fundamental group of the complement of a plane curve.

  \end{abstract}
\maketitle

\section*{Introduction}

Let $(X,0)$ be the germ of a complex normal surface singularity, with neighborhood boundary (or link) $\Sigma$.
A  \emph{smoothing of $(X,0)$} is a morphism $f:(\mathcal{X},0)\rightarrow (\mathbb C,0)$, with $(\mathcal{X},0)$ a three-dimensional isolated Cohen-Macaulay singularity, equipped with an isomorphism $(f^{-1}(0),0)\simeq (X,0)$.  The Milnor fibre $M$ is the general fibre $f^{-1}(\delta)$, a $4$-manifold with boundary $\Sigma$. The second Betti number of $M$ is called $\mu$, the Milnor number of the smoothing; the first Betti number always vanishes \cite{gst}.   We say $f$ is a  $\qhd$ (or \emph{rational homology disk}) smoothing if $\mu=0$, i.e., the Euler characteristic $\chi(M)=1$. In such a case, the $3$-manifold $\Sigma$ has a particularly interesting filling (e.g., it is Stein).

\begin{example0}
Such smoothings occur for cyclic quotient singularities of type $\frac{n^2}{nq-1}\equiv
\frac{1}{n^2}(1,n q-1)$, where $0<q<n, (n,q)=1$ (\cite[(2.7)]{w2}).   
One proceeds as follows; if  $f(x,y,z)=xz-y^n$, then $f:\C^3\rightarrow \mathbb C$ is a smoothing of the $A_{n-1}$ singularity, whose Milnor fibre $M$ is simply connected, with Euler characteristic $n$.   Let  $G\subset GL(3,\C)$ by the diagonal cyclic group generated by   $[\zeta, \zeta^q, \zeta^{-1}]$, where $\zeta=\exp\frac{2\pi i}{n}$. 
The group $G$ acts freely on $\C^3\setminus\{0\}$ and $f$ is $G$-invariant; so the induced map $f:\C^3/G\rightarrow \C$ is a smoothing of the cyclic quotient singularity $A_{n-1}/G$, which has type $\frac{n^2}{nq-1}$. The new Milnor fibre is the free quotient $M/G$, of Euler characteristic $1$, hence is a $\qhd$.  We call this class~$\mathcal G_{n,q}$.
\end{example0}

In the early 1980's, the second named author produced $9$ other families (e.g., \cite[(5.9.2)]{w3}, but mainly unpublished).   As with the family $\mathcal G_{n,q}$, some examples can be produced by an explicit ``quotient construction''.  

Start with a smoothing $f:(\mathcal Y,0)\rightarrow (\C,0)$ of some $2$-dimensional germ $(Y,0)$, with simply connected Milnor fibre $M$.  Assume $G$ is a group of automorphisms of $(\mathcal Y,0)$ acting fixed point freely off $0$, with $f$ $G$-invariant.  Then $f:(\mathcal Y/G,0)\rightarrow (\C,0)$ is a smoothing of $Y/G$ with Milnor fibre $M/G$.  If $\chi(M)=|G|$, then $\chi(M/G)=1$, so $M/G$ is a $\qhd$.  

This method can produce equations for the families eventually named $\mathcal W(p,q,r)$, $\mathcal N(p,q,r)$, $\mathcal A^4(p)$, $\mathcal B^4(p)$, and $\mathcal C^4(p)$ (here $p,q,r \geq 0$); the $\mathcal Y$ involved could be $\C^3$, a hypersurface singularity in $\C^4$, or the cone over a del Pezzo surface in $\mathbb P^6$.  All these singularities are weighted homogeneous, and a superscript denotes the valence of the central curve in the graph of the minimal good resolution.

It turns out that other (and in fact all) weighted homogeneous $\qhd$ examples can be constructed by using the Pinkham method of ``smoothing of negative weight'' \cite{p3},~\cite{w6}:
\begin{itemize}
\item Consider the projective $\C^*$-compactification of the singularity;
\item resolve at infinity to obtain a curve configuration $E'$;
\item if possible, smooth the projective surface keeping $E'$ fixed.  
\item The projective general fibre $Z$ is a rational surface containing a configuration $D'$ isomorphic to $E'$, obtained by blowing-up $\mathbb P^2$ along an appropriate plane curve $D$ so that $D'$ is the total transform of $D$ minus several curves. 
\item Then $Z\setminus D'$ is the Milnor fibre of the smoothing.    
\end{itemize}

The Milnor fibre is a $\qhd$ when the components of $D'$ rationally span $\pic Z$.  Given some cohomological vanishing conditions, Pinkham's construction allows one to go backwards from a given pair $(Z,D')$ to a $\qhd$ smoothing of a weighted homogeneous surface singularity.  
The group $\pi_1(Z\setminus D')$ is difficult to compute in general, unless one already knows that $\pi_1(\mathbb P^2\setminus D)$ is abelian; this occurs for types $\mathcal W, \mathcal N$, and $\mathcal M$, since here $D$ can be taken to be four lines in general position.

The possible resolution graphs of any $(X,0)$ admitting a $\qhd$ smoothing were greatly restricted by the results of \cite{SSW}, which also gave names to the known examples.  For $(X,0)$ weighted homogeneous, these turned out to be the only ones, via:  

\begin{theorem0}[Bhupal-Stipsicz Theorem \cite{bs}]  The weighted homogeneous surface singularities admitting a $\qhd$ smoothing are the following families: 
$\mathcal G_{n,q}$, $\mathcal W(p,q,r)$, $\mathcal N(p,q,r)$, $\mathcal M(p,q,r)$, $\mathcal B_2^3(p)$, $ \mathcal C^3_2(p)$, $ \mathcal C^3_3(p)$, $\mathcal A^4(p)$, $ \mathcal B^4(p)$, and $\mathcal C^4(p)$.
\end{theorem0}

Some further results are as follows:
\begin{enumerate}
\item By earlier results of Laufer (\cite{laufert}), for the first seven families, the analytic type is uniquely determined by the graph of the singularity.  For the valence $4$ examples, there is in each case a unique cross-ratio for which a $\qhd$ smoothing exists (\cite{jf}).
\item In the base space of the semi-universal deformation of these singularities, a $\qhd$ smoothing component has dimension one, 
and there are one or two such components (\cite{jf}, \cite[(7.2)]{w7}).
\item The first homology of the Milnor fibre is isomorphic to a self-isotropic subgroup of $H_1(\Sigma)$, the discriminant group of the singularity (\cite{lw}).
\item Every $\qhd$ smoothing arises from a quotient construction $\mathcal Y\rightarrow \mathcal Y/G$, where $\mathcal Y$ is canonical Gorenstein and $G=\pi_1(M)$ \cite{w6}.  In general, the embedding dimension of $\mathcal Y$ can be arbitrarily large, so one cannot expect explicit equations, and only the Pinkham method seems available.
\item The fundamental group of the Milnor fibre is abelian for the families $\mathcal G$, $\mathcal W$, $\mathcal N$, $\mathcal M$, 
and non-abelian metacyclic for $\mathcal A^4, \mathcal B^4, \mathcal C^4$ (\cite{jf},~\cite{w5},~\cite{w7}).
\end{enumerate}
  
The main results of this paper consider the three exceptional cases for which $\pi_1(M)$ was unknown, namely $\mathcal B_2^3(p)$, $ \mathcal C^3_2(p)$, and $ \mathcal C^3_3(p)$.  
We state the results and list the resolution dual graphs
(with the convention that no weight means $-2$).

\begin{theorem0}\label{thm:c23}  The fundamental group of the $\qhd$ smoothing of $\mathcal C_2^3(p)$ is cyclic, of order $3(p+3)$.
\end{theorem0}

\begin{figure}[ht]
\begin{tikzpicture}[scale=1.5]
\foreach \a in {0,...,6}
{
\coordinate (A\a) at (\a,0);
}
\draw (A0)--(A2) (A4)--(A6);
\draw[dashed] (A2)--(A4);
\foreach \a in {0,1,2,4,5,6}
{
\fill (A\a) circle [radius=.1];
}
\draw (A5)--(5,1);
\fill (5,1) circle [radius=.1];
\node[above=4pt] at (A3) {$\overbrace{\hphantom{\hspace{3cm}}}^p$};
\node[above=2pt] at (A0) {$-(p+3)$};
\node[above=2pt] at (A6) {$-3$};
\node[left=2pt] at (5,1) {$-6$};
\end{tikzpicture}
\caption{Resolution dual graph of $\mathcal C_2^3(p)$}
\label{fig:C23-local}
\end{figure}
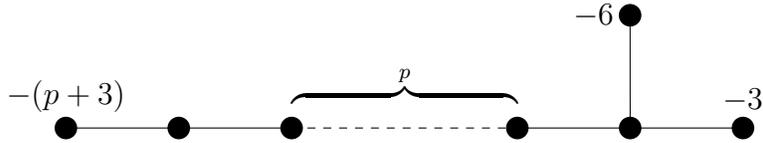

\begin{theorem0}\label{thm:c33}  The fundamental group of the $\qhd$ smoothing of $\mathcal C_3^3(p)$ is cyclic, of order $2(p+4)$.
\end{theorem0}

\begin{figure}[ht]
\begin{tikzpicture}[scale=1.5]
\foreach \a in {0,...,6}
{
\coordinate (A\a) at (\a,0);
}
\draw (A0)--(A2) (A4)--(A6);
\draw[dashed] (A2)--(A4);
\foreach \a in {0,1,2,4,5,6}
{
\fill (A\a) circle [radius=.1];
}
\draw (A5)--(5,1);
\fill (5,1) circle [radius=.1];
\node[above=4pt] at (A3) {$\overbrace{\hphantom{\hspace{3cm}}}^{p}$};
\node[above=2pt] at (A0) {$-(p+4)$};
\node[left=2pt] at (5,1) {$-6$};
\end{tikzpicture}
\caption{Resolution dual graph of $\mathcal C_3^3(p)$}
\label{fig:C33-local}
\end{figure}
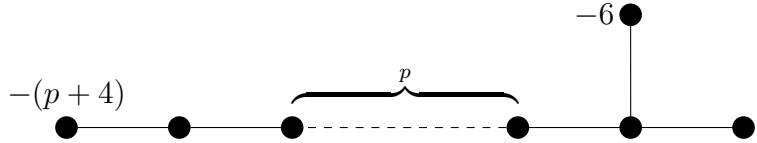

\begin{theorem0}\label{thm:b23}  
The fundamental group of the $\qhd$ smoothing of $\mathcal B_2^3(p)$ is non-abelian of order $4(p+2)(p+3)$, with an index $2$ cyclic subgroup and abelianization of order $4(p+3)$.  There is an explicit quotient construction, with $\mathcal Y$ a hypersurface singularity in $\C^4$.

\end{theorem0}

\begin{figure}[ht]
\begin{tikzpicture}[scale=1.5]
\foreach \a in {0,...,6}
{
\coordinate (A\a) at (\a,0);
}
\draw (A0)--(A2) (A4)--(A6);
\draw[dashed] (A2)--(A4);
\foreach \a in {0,1,2,4,5,6}
{
\fill (A\a) circle [radius=.1];
}
\draw (A5)--(5,1);
\fill (5,1) circle [radius=.1];
\node[above=4pt] at (A3) {$\overbrace{\hphantom{\hspace{3cm}}}^p$};
\node[above=2pt] at (A0) {$-(p+3)$};
\node[above=2pt] at (A1) {$-3$};
\node[above=2pt] at (A6) {$-4$};
\node[left=2pt] at (5,1) {$-4$};
\end{tikzpicture}
\caption{Resolution dual graph of $\mathcal B_2^3(p)$}
\label{fig:b23-local}
\end{figure}
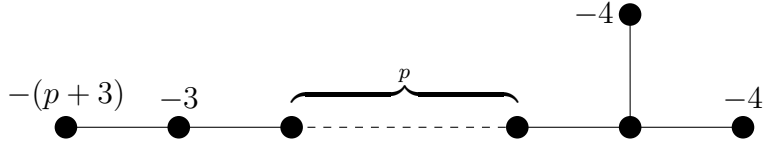

In all three cases, one can use the plane curves $D$ and their blow-ups to produce the pair $(Z,D')$ for which one must compute the fundamental group of the complement.
The proofs can be found at \S\ref{sec:b23}, \S\ref{sec:c23} and~\S\ref{sec:c33}. 

 For  $\mathcal B^3_2(p)$, the precise description of the fundamental group has allowed us
to find a direct quotient construction in \S\ref{sec:equations}.The first author initially did the computation in case $p=0$, discovering that the group was non-abelian, of order $24$.  The second author used this unexpected result to first construct a non-abelian cover of degree~$24$ of the original singularity,  a complete intersection in $\C^4$, and then to find a fixed-point free $4$-dimensional representation of the group leaving this cover and its smoothing invariant.  The first author later extended his computation of the fundamental group for all $p$, while independently the second author extended the explicit construction for all $p$, as in Theorem~\ref{thm:cyclic_construction} below.  Once a quotient construction for all $p$ is obtained, the results of Fowler \cite{jf}  imply there is only one $\Q$HD smoothing, so the fundamental group computations using $D$ become unnecessary.  Nonetheless, a detailed presentation of these results is included in \S\ref{sec:b23}, as the computational methods are important and illustrate a basic method.

\begin{ack}
This paper originated from fruitful conversations after a talk by the second author at the Némethi Conference in Budapest in May 2019. The scientific environment of this event, and the financial support given by the conference organizers, made possible initial discussions among the authors and eventually led to this work
\end{ack}

\section{The family \texorpdfstring{$\mathcal B^3_2(p)$}{B32p}}

From Figure~\ref{fig:b23-local} we see that the continued fraction expansion of the long arm, starting from the outside, arises from $2(p+2)^2/(2p+3)$, and the discriminant group has order $16(p+3)^2$; it follows that the first homology group of the Milnor fibre has order $4(p+3)$.

\begin{theorem}\label{thm:cyclic_construction}  For each $p\geq 0$, there is a hypersurface singularity $(\mathcal Y,0)\subset (\C^4,0)$, a group $G\subset SL(4,\C)$ acting freely on $\mathcal Y\setminus\{0\}$, and a $G$-invariant function $f$, so that $f:\mathcal Y/G\rightarrow \C$ provides the $\qhd$ smoothing of a singularity of type $\mathcal  B_2^3(p)$.
\end{theorem}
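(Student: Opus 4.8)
The plan is to exhibit explicitly a hypersurface $(\mathcal Y,0)\subset(\C^4,0)$, a finite group $G\subset SL(4,\C)$, and a $G$-invariant polynomial $f$, and then to verify the four requisite properties: (i) $\mathcal Y$ has an isolated singularity at $0$; (ii) $G$ acts freely on $\mathcal Y\setminus\{0\}$; (iii) $f$ is $G$-invariant and $f:\mathcal Y\to\C$ is a smoothing of $(Y,0)=(\mathcal Y\cap f^{-1}(0),0)$ with \emph{simply connected} Milnor fibre $M$ satisfying $\chi(M)=|G|$; and (iv) the quotient $Y/G$ is a singularity of type $\mathcal B_2^3(p)$ (i.e.\ has the resolution graph of Figure~\ref{fig:b23-local}). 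Granting these, the general argument recalled in the introduction (the quotient construction) immediately gives that $f:\mathcal Y/G\to\C$ is a $\qhd$ smoothing: the Milnor fibre is $M/G$, which is a free quotient, so $\chi(M/G)=\chi(M)/|G|=1$ and $b_1=0$ by \cite{gst}, whence $\mu=0$.

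For the construction itself, I would be guided by two inputs. First, the homological data already extracted from Figure~\ref{fig:b23-local}: the link has discriminant group of order $16(p+3)^2$, $H_1$ of the Milnor fibre has order $4(p+3)$, and by Theorem~\ref{thm:b23} the group $G=\pi_1(M/G)$ we are after is non-abelian of order $4(p+2)(p+3)$ with an index-two cyclic subgroup and abelianization of order $4(p+3)$; for $p=0$ this is a non-abelian group of order $24$. Second, the natural candidate for $\mathcal Y$: since the valence-$3$ graph with a long arm suggests a Brieskorn-type or splice-quotient picture, I would look for $\mathcal Y$ defined by one equation $g(x,y,z,w)=0$ whose Milnor fibre is simply connected — e.g.\ a suitably weighted-homogeneous hypersurface (a cone over a curve, or a Brieskorn–Pham type polynomial deformed appropriately) — together with a linear diagonal or monomial action of $G$. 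Concretely, I expect $g$ to be chosen so that $\mathcal Y$ is Gorenstein (indeed canonical, consistent with item (4) of the introduction), $G$ to be generated by a diagonal element of order $2(p+2)(p+3)$ or similar together with one extra element making it non-abelian, with weights arranged so that the $SL(4,\C)$ condition holds (this is forced by the Gorenstein/quotient compatibility). The function $f$ would be a $G$-invariant monomial or low-degree polynomial transverse to $\mathcal Y$ at $0$.

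The verifications then break into: checking freeness of the $G$-action on $\mathcal Y\setminus\{0\}$, which amounts to showing that for each non-identity $\gamma\in G$ the fixed locus $\mathrm{Fix}(\gamma)$ meets $\mathcal Y$ only at $0$ — a finite eigenvalue/monomial bookkeeping, easiest for the cyclic part and requiring a little care for the non-cyclic generator; checking that $\mathcal Y$ and the central fibre $Y$ have isolated singularities (a Jacobian computation); computing the resolution graph of $Y/G$ and matching it to Figure~\ref{fig:b23-local}, for which I would resolve $Y$ equivariantly — or, more efficiently, recognize $Y/G$ directly from known lists of weighted-homogeneous singularities once its weights/degree are in hand, using Laufer's rigidity (introduction, item (1)) so that matching the \emph{graph} suffices; and verifying $\pi_1(M)=1$ and $\chi(M)=|G|$, the latter from the weighted-homogeneous Milnor-number/Euler-characteristic formula and the former either because $\mathcal Y$ is a hypersurface with a well-understood (e.g.\ Brieskorn) Milnor fibre or via a Lefschetz-type argument.

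\textbf{Main obstacle.} The hard part is \emph{finding} the right $(\mathcal Y,G,f)$ — this is not a routine deduction but the crux of the theorem, and the excerpt makes clear it was discovered in stages (first $p=0$ by hand, recognizing the order-$24$ non-abelian group, then a degree-$24$ cover of the original complete intersection, then a fixed-point-free $4$-dimensional representation, then the extension to all $p$). Once the data is guessed correctly, the four verifications above are mechanical; but reverse-engineering a hypersurface $\mathcal Y$ and a non-abelian $G\subset SL(4,\C)$ whose quotient realizes precisely the graph of $\mathcal B_2^3(p)$ with a simply connected cover of Euler characteristic $|G|$ is where all the work lies. A secondary technical point is confirming the $SL(4,\C)$ (as opposed to merely $GL$) condition, which is exactly what guarantees $\mathcal Y/G$ is again Gorenstein and is needed for the singularity to be of the required type; I would verify it by checking that the product of the weights of $G$ on the four coordinates equals the weight on $g$, i.e.\ that the action preserves the natural volume form $dx\wedge dy\wedge dz\wedge dw / dg$.
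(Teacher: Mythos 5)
Your proposal correctly identifies the logical skeleton — exhibit $(\mathcal Y,G,f)$, check isolatedness, freeness, $G$-invariance, simple connectivity and $\chi(M)=|G|$ via Greuel--Hamm, then match the resolution graph of $Y/G$ to Figure~\ref{fig:b23-local} — and this is indeed exactly the shape of the paper's argument. But the proposal contains a genuine gap, and you name it yourself: you never actually produce the data. The entire content of the theorem is the explicit construction, and ``I would look for a suitably weighted-homogeneous hypersurface together with a diagonal or monomial action'' is a research plan, not a proof. For the record, the paper takes $m=p+2$, $N=2m(m+1)$, $\omega$ a primitive $N$th root of unity, $\zeta=\omega^m$, the group $G$ generated by the diagonal element $S=\frac{1}{N}[1,-(2m+1),2m+1,-1]$ of order $N$ together with the monomial element $T(a,b,c,d)=(\zeta b, a, d,\zeta^{-1}c)$ satisfying $TST^{-1}=S^{-(2m+1)}$ and $T^2=S^m$ (so $|G|=2N=4(p+2)(p+3)$, with centre $\langle S^m\rangle$ of order $2(m+1)$), the hypersurface $\mathcal Y=\{zw+x^{2m}+\zeta y^{2m}=0\}$, and $f=xw+yz$. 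Freeness reduces to the observations that the eigenvalues of $S$ are primitive $N$th roots of unity and that $(S^iT)^2=S^{-m(2i-1)}\neq I$.

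The second place where your proposal underestimates the work is step (iv). You suggest that once the data is in hand the verifications are ``mechanical,'' and that one might ``recognize $Y/G$ directly from known lists\dots using Laufer's rigidity so that matching the graph suffices.'' Laufer's rigidity only helps \emph{after} you know the graph of $Y/G$; computing that graph is precisely what the paper calls the extensive computation. The paper does this by lifting $G$ to the Seifert partial resolution of $Y$ (the weighted blow-up of $\C^4$ with weights $1,1,m,m$), identifying the central curve as a hyperelliptic curve of genus $m$ with two $A_{m-1}$ singularities of $\mathcal S$ along it, locating all fixed points of $\bar G=G/\langle S^{2m}\rangle$ on the relevant affine charts, showing the quotient carries two $\frac14[1,1]$ points and one cyclic quotient point of type $\frac{1}{2m^2}[-(2m+1),1]$ along a now-rational central curve, and finally pinning down the central self-intersection $-d=-2$ by comparing the discriminant formula $4\cdot4\cdot 2m^2\bigl[d-\frac14-\frac14-\frac{2m^2-2m-1}{2m^2}\bigr]$ with the known value $16(m+1)^2$. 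None of this is routine, and your proposal gives no indication of how the cyclic quotient data at the ends of the long arm would be extracted. So while your outline is sound and consistent with the paper's strategy, as it stands it does not constitute a proof of the theorem.
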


Writing down explicitly the equations and the representation of $G$, it is straightforward to construct a $\qhd$ smoothing whose Milnor fibre has $G$ as fundamental group.  What takes extensive computation is the verification that the smoothed singularity is of type $\mathcal  B_2^3(p)$.

\subsection{The group \texorpdfstring{$G$}{G}}\label{sec:group}
\mbox{}

Let $m\geq 2$ be an integer, $N=2m(m+1)$, $\omega$ a primitive $N^{th}$ root of $1$.  Consider the diagonal linear transformation of $\C^4$ given by $$S(a,b,c,d)=(\omega a,\  \omega^{-(2m+1)}b,\,\ \omega^{2m+1}c,\,  \ \omega^{-1}d).$$
The action $S$ can be also be written
\[
S=\frac{1}{N}[1,\ -(2m+1),\ 2m+1,\ -1].
\]
This allows one to quickly write down $S^p$ when $N=pN'$; replace $\frac{1}{N}$ by $\frac{1}{N'}$, and then reduce the entries in $[\bullet,\bullet,\bullet,\bullet ]
\bmod N'$.

Let $\zeta=\omega^{m}$ a primitive $(2m+2)^{th}$ root of $1$, and  define 
    $$T(a,b,c,d)=(\zeta b,a,d,\zeta^{-1}c).$$ 
One easily finds $$S^N=I,\  TST^{-1}=S^{-(2m+1)},\ T^2=S^{m}=\frac{1}{2m+2}[1,1,-1,-1].$$
Let $G=G_m\subset SL(4,\C)$ be the group generated by $S$ and $T$.  
\begin{proposition} The following properties hold for $G$:
\begin{enumerate}[label=\rm(G\arabic{enumi})]
\item $|G|=2N=4m(m+1)$.
\item The abelianization of $G$ has order $4(m+1)$, is cyclic when $m$ is odd, and is $\Z/(2(m+1)) \times \Z/(2)$ if $m$ is even.
\item The center of $G$ is the cyclic group generated by $S^{m}$, of order $2(m+1)$.
\item $S^iT$ has even order $> 2$.
\item $G$ acts freely on $\C^4$ off the origin.
\end{enumerate}

\begin{proof}  The first two statements are straightforward.  One sees that no $S^iT$ commutes with $S$, so the center is generated by a power of $S$, easily seen to be $S^{m}$.  The powers of $\omega$ that occur in $S$ are all primitive roots of $1$, so the subgroup generated by $S$ acts freely on $\C^4\setminus\{0\}$.  Note $$(S^iT)^2=S^{-m(2i-1)},$$ which can not equal the identity, so itself has no fixed points; thus, $S^iT$ has  no fixed points.
\end{proof} 

\end{proposition}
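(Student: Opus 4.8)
The goal is to establish the five properties of the group $G=G_m=\langle S,T\rangle\subset SL(4,\C)$. The strategy is to first understand the normal subgroup $\langle S\rangle\cong\Z/N$ and then analyze the extension by $T$.

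\textbf{Plan.} First I would establish that $H=\langle S\rangle$ is normal in $G$ of order $N=2m(m+1)$: normality follows from the relation $TST^{-1}=S^{-(2m+1)}$, and since $-(2m+1)$ is a unit mod $N$ (indeed $\gcd(2m+1,2m(m+1))=1$) conjugation by $T$ is an automorphism of $H$, so $G/H$ is generated by the image of $T$, which has order dividing $2$ because $T^2=S^m\in H$; since $T\notin H$ (e.g.\ $T$ is not diagonal, while every element of $H$ is), the quotient has order exactly $2$, giving (G1): $|G|=2N=4m(m+1)$. For (G2), I would abelianize the presentation: in $G^{\mathrm{ab}}$ the relation $TST^{-1}=S^{-(2m+1)}$ becomes $S=S^{-(2m+1)}$, i.e.\ $S^{2m+2}=1$, and the relation $T^2=S^m$ persists; so $G^{\mathrm{ab}}\cong\langle s,t\mid s^{2m+2}=1,\ t^2=s^m,\ [s,t]=1\rangle$. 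A short computation (Smith normal form of the relation matrix) shows this group has order $4(m+1)$, is cyclic generated by $t$ when $m$ is odd (since then $s^m$ generates $\langle s\rangle$), and is $\Z/(2m+2)\times\Z/2$ when $m$ is even. For (G3), the center: an element $S^iT^{\epsilon}$ with $\epsilon\in\{0,1\}$ is central iff it commutes with both $S$ and $T$; if $\epsilon=1$ then commuting with $S$ forces $S^{-(2m+1)}=S$, impossible since $2m+2\nmid N$ for $m\ge 2$; so the center lies in $H$, and $S^i$ is central iff $TS^iT^{-1}=S^i$, i.e.\ $S^{-i(2m+1)}=S^i$, i.e.\ $N\mid i(2m+2)$, i.e.\ $m(m+1)\mid i(m+1)$, i.e.\ $m\mid i$; thus the center is $\langle S^m\rangle$, of order $N/\gcd(N,m)=2(m+1)$.

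\textbf{The last two points.} For (G4), I would use the computation $(S^iT)^2=S^{-m(2i-1)}$ (which itself follows from $TS^iT^{-1}=S^{-i(2m+1)}$ and $T^2=S^m$). Since $2i-1$ is odd and $m$ can be even, $S^{-m(2i-1)}$ need not be trivial; in fact its order is $N/\gcd(N,m(2i-1))$. Because $2i-1$ is coprime to $2$, $\gcd(N,m(2i-1))=\gcd(2m(m+1),m(2i-1))=m\gcd(2(m+1),2i-1)=m\gcd(2m+1-(2i-1)+\dots,\,)$; cleanly, $\gcd(2(m+1),2i-1)=\gcd(2m+1,2i-1)$ divides $2m+1$, and in any case $(S^iT)^2$ is a power of $S^m$, hence central, and equals $S^{-m(2i-1)}$ which is never the identity since $-m(2i-1)\not\equiv 0\pmod N$ (as $N=2m(m+1)$ and $(2i-1)$ is odd, so $2m(m+1)\nmid m(2i-1)$ would require $2(m+1)\mid 2i-1$, impossible by parity). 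Thus $(S^iT)^2\ne I$, so $S^iT$ has order $>2$; and the order of $S^iT$ is twice the order of $(S^iT)^2\in\langle S\rangle$, hence even. Finally (G5): a nontrivial element of $G$ is either $S^i$ ($1\le i\le N-1$) or $S^iT$. The powers of $\omega$ appearing on the diagonal of $S$ are $1,-(2m+1),2m+1,-1$, each coprime to $N$; hence for $1\le i\le N-1$ no diagonal entry $\omega^{i\cdot(\text{unit})}$ equals $1$, so $S^i$ has only $0$ as a fixed point. For elements of the form $S^iT$, the relation $(S^iT)^2=S^{-m(2i-1)}$ shows $(S^iT)^2$ is a nontrivial element of $\langle S\rangle$ (by the parity argument above), which we have just seen is fixed-point-free on $\C^4\setminus\{0\}$; hence $S^iT$ is fixed-point free as well. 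Therefore $G$ acts freely off the origin.

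\textbf{Main obstacle.} The only genuinely delicate point is the bookkeeping with $\gcd$'s modulo $N=2m(m+1)$ — in particular verifying that $S^{-m(2i-1)}\ne I$ for all $i$ (equivalently that $2(m+1)\nmid(2i-1)$, which is immediate by parity) and pinning down the exact structure of $G^{\mathrm{ab}}$ in the even and odd cases. None of this is conceptually hard, but one must be careful that the identities $TST^{-1}=S^{-(2m+1)}$, $T^2=S^m$, and $(S^iT)^2=S^{-m(2i-1)}$ are used consistently and that the parity of $m$ is tracked throughout. Everything else is routine once the extension structure $1\to\Z/N\to G\to\Z/2\to 1$ is in hand.
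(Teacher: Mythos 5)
Your argument is correct and follows essentially the same route as the paper's (much terser) proof: the extension $1\to\langle S\rangle\to G\to\Z/2\to 1$ for (G1)--(G3), the identity $(S^iT)^2=S^{-m(2i-1)}\neq I$ for (G4), and the fact that every diagonal exponent of $S$ is a unit mod $N$ for (G5). One small slip to fix: in the center computation, the condition ruling out central elements of the form $S^iT$ is $N\nmid 2m+2$ (true because $N=m(2m+2)$ and $m\ge 2$), not $2m+2\nmid N$ as written.
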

   
\begin{remark} One could also consider the simpler linear transformation 
$T'$ defined by $T'(a,b,c,d)=(b,-a,d,-c)$, and the group $G'\subset SL(4,\C)$ that $S$ and $T$ generate.  One now has 
$$S^N=I,  \ T'^2=S^{m(m+1)}=-I,\   T'ST'^{-1}=S^{-(2m+1)}.$$   When $m$ is even,  $G'$ is isomorphic to $G$, as seen by setting $T'=S^{\frac{m+2}{2}}T$; one may use this representation to consider later simpler polynomials $xw-yz$ and $zw-x^{2m}+y^{2m}$.
   
However, when $m$ is odd, $G'$ is not isomorphic to $G$, and does not act freely on $\C^4\setminus\{0\}$ because $S^{\frac{m+1}{2}}T'$ has order $2$ and fixed points.  The relevant representation of $G$ is now more complicated than in the even case.
   
   \end{remark}

   We make some additional remarks about the group $G$, which however are not used later on.

Define the \emph{generalized quaternion 2-group $Q_r$} by generators and relations as 
$$Q_r:  A^{2^{r-1}}=1, \ A^{2^{r-2}}=B^2, \ BAB^{-1}=A^{-1}.$$
Then $|Q_r|=2^r$, \  $Q_{r-1}\subset Q_r$ (use generators $A^2$ and $B$), and $Q_3$ is the usual quaternion group of order $8$.

\begin{proposition}  For $G$ as above, write $N=2m(m+1)=2^{r+1}p,$ where $p$ is odd.
\begin{enumerate}[label=\rm(Q\arabic{enumi})]
   \item\label{Q1} $H=\langle S^{2^{r+1}}\rangle$  is a cyclic normal subgroup of order $p$, consisting of all elements of odd order
   \item\label{Q2} If $m$ is odd,  write $m+1=2^r(2u-1)$, and $J=\langle S^uT\rangle$.  Then $J$ is a cyclic Sylow $2$-subgroup of order $2^{r+2}$, and $G$ is the semi-direct product of $H$ and $J$.
   \item\label{Q3} If $m=2^rq$ is even (with $q$ odd), the Sylow $2$-subgroup $$J=\langle S^{q(m+1)},S^{\frac{m+2}{2}}T\rangle$$ is isomorphic to $Q_{r+2}$, and $G$ is the semi-direct product of $H$ and $J$.
   \item\label{Q4} The $2$-Sylow subgroup of $G$  is normal if and only if $m$ is a power of $2$, in which case $G$ is the direct product of  $H=\langle S^m\rangle$ and $J=\langle S^{m+1}, S^{\frac{m+2}{2}}T\rangle$.
\end{enumerate}
   \end{proposition}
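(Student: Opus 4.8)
The plan is to exploit the normal form coming from the index-two subgroup $\langle S\rangle$ (it is index two since $T^2=S^m\in\langle S\rangle$ while $T\notin\langle S\rangle$, because $|G|=2N$ by~(G1)): every element of $G$ lies in $\langle S\rangle$ or in $\langle S\rangle T$, and I will use repeatedly the recorded identity $(S^iT)^2=S^{-m(2i-1)}$ together with~(G4), that every element of $\langle S\rangle T$ has even order $>2$. First I record the arithmetic, writing $v_2$ for the $2$-adic valuation: from $N=2m(m+1)=2^{r+1}p$ and $|G|=2N$ we get $|G|=2^{r+2}p$, so a Sylow $2$-subgroup has order $2^{r+2}$; exactly one of $m,m+1$ is even, so $r=\max(v_2(m),v_2(m+1))$, equal to $v_2(m+1)$ when $m$ is odd and to $v_2(m)$ when $m$ is even; and $S^k$ has order $N/\gcd(N,k)$, in particular $N/k$ whenever $k\mid N$. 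Now~(Q1) follows: $S^{2^{r+1}}$ has order $N/2^{r+1}=p$, so $H=\langle S^{2^{r+1}}\rangle$ is the unique subgroup of order $p$ of the cyclic group $\langle S\rangle$, hence is precisely the set of odd-order elements of $\langle S\rangle$; since~(G4) rules out odd order in $\langle S\rangle T$, $H$ is exactly the set of odd-order elements of $G$, so it is normal --- equivalently, $TST^{-1}=S^{-(2m+1)}$ visibly carries $\langle S^{2^{r+1}}\rangle$ to itself.

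For~(Q2) and~(Q3) --- the two Sylow computations, which are the heart of the argument --- I would argue according to the parity of $m$. If $m$ is odd, write $m+1=2^r(2u-1)$, so $N=2^{r+1}m(2u-1)$; then $(S^uT)^2=S^{-m(2u-1)}$ has order $N/\bigl(m(2u-1)\bigr)=2^{r+1}$, and as $S^uT\notin\langle S\rangle$ while its square lies in $\langle S\rangle$, the order of $S^uT$ is $2^{r+2}$. So $J=\langle S^uT\rangle$ is a cyclic Sylow $2$-subgroup; since $|H|=p$ and $|J|=2^{r+2}$ are coprime, $H\cap J=1$ and $HJ=G$, and with $H$ normal this gives $G=H\rtimes J$. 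If $m=2^rq$ with $q$ odd, put $A=S^{q(m+1)}$ and $B=S^{(m+2)/2}T$ (an integer exponent since $m$ is even); here $N=2^{r+1}q(m+1)$, so $A$ has order $2^{r+1}$, while $A^{2^r}=S^{2^rq(m+1)}=S^{m(m+1)}=S^{-m(m+1)}=B^2$ (using $2^rq=m$, $2m(m+1)=N$, and $B^2=(S^{(m+2)/2}T)^2=S^{-m(m+1)}$), the common value being the central element $S^{N/2}=-I$ of order $2$; finally $BAB^{-1}=TAT^{-1}=S^{-(2m+1)q(m+1)}=A^{-1}$, since $(2m+1)q(m+1)-q(m+1)=qN\equiv 0\pmod N$. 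Thus $A,B$ satisfy the defining relations of $Q_{r+2}$, giving a surjection $Q_{r+2}\twoheadrightarrow J=\langle A,B\rangle$; this is an isomorphism because $\langle A\rangle\le J$ already has order $2^{r+1}$ and $B\notin\langle S\rangle\supseteq\langle A\rangle$ forces $2^{r+2}\le|J|\le|Q_{r+2}|=2^{r+2}$. So $J\cong Q_{r+2}$ is a Sylow $2$-subgroup, and the same coprimality argument again yields $G=H\rtimes J$.

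For~(Q4): a Sylow $2$-subgroup is normal iff it is unique, and since $G=H\rtimes J$ with $H$ the (normal) subgroup of odd-order elements, every Sylow $2$-subgroup is an $H$-conjugate of $J$ (as $G=HJ$); hence $J$ is normal iff $H$ normalizes $J$, which --- $H$ and $J$ both being normal with $H\cap J=1$ --- is equivalent to $[H,J]=1$, i.e.\ to $H\subseteq Z(G)=\langle S^m\rangle$ by~(G3). Since $H=\langle S^{2^{r+1}}\rangle$ sits inside $\langle S\rangle\cong\Z/N$, that inclusion holds iff $m\mid 2^{r+1}$; this fails for odd $m\ge 2$, and for even $m=2^rq$ it holds exactly when $q=1$ --- that is, iff $m$ is a power of $2$. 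In that case $q=1$, so $J=\langle S^{m+1},S^{(m+2)/2}T\rangle$ and $G=H\times J$ with $H$ the odd-order normal subgroup of~(Q1), which here is $\langle S^{2m}\rangle$, the odd part of the center $\langle S^m\rangle$. I expect the main obstacle to be the even case of~(Q3) --- checking the three generalized-quaternion relations simultaneously and then pinning $|J|$ down to exactly $2^{r+2}$ --- with the recurring supporting nuisance being the $2$-adic bookkeeping: keeping the divisibilities $m(2u-1)\mid N$, $q(m+1)\mid N$, $m(m+1)\mid N$ exact so that the order formulas collapse cleanly, and correctly identifying $S^{m(m+1)}$ with $S^{N/2}=-I$.
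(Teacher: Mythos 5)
Your proof is correct, and for (Q1)--(Q3) it follows essentially the same route as the paper: identify $H$ as the odd part of $\langle S\rangle$, use the identity $(S^iT)^2=S^{-m(2i-1)}$ to compute orders, and match the generators $A=S^{q(m+1)}$, $B=S^{(m+2)/2}T$ against the presentation of $Q_{r+2}$; you supply details the paper leaves implicit, notably the count $2^{r+1}<|J|\le |Q_{r+2}|=2^{r+2}$ that upgrades the surjection $Q_{r+2}\twoheadrightarrow J$ to an isomorphism. Where you genuinely diverge is (Q4). The paper argues by direct computation: for $m$ odd, normality of the cyclic $J$ would force $G$ abelian; for $m$ even, it conjugates $S^{(m+2)/2}T$ by $S$, demands the result lie in $J$, and extracts the condition $2^{r+1}q\mid qi-2$, hence $q=1$. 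You instead observe that every Sylow $2$-subgroup is an $H$-conjugate of $J$, so normality of $J$ is equivalent to $[H,J]=1$, i.e.\ to $H\subseteq Z(G)=\langle S^m\rangle$, i.e.\ to $m\mid 2^{r+1}$ --- which treats both parities of $m$ uniformly and replaces the conjugation calculation with a one-line divisibility; this is a cleaner argument and also explains conceptually why the criterion is ``the odd part is central.'' One further point in your favour: in the direct-product clause you write $H=\langle S^{2m}\rangle$ rather than the $\langle S^m\rangle$ appearing in the statement of (Q4); that is the correct reading, since by (Q1) one has $H=\langle S^{2^{r+1}}\rangle=\langle S^{2m}\rangle$ when $m=2^r$, of odd order $m+1$, whereas $\langle S^m\rangle$ has even order $2(m+1)$ and is the center, not the odd-order complement.
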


\begin{proof}  \ref{Q1} is straightforward.  For \ref{Q2}, note $S$ has order $N=2^{r+1}m(2u-1)$ and $(S^uT)^2=S^{-m(2u-1)}$, so $J$ is cyclic of order $2^{r+2}$, hence is a $2$-Sylow subgroup.  Since $G=HJ$, $H\cap J=\{I\}$, and $H$ is normal, one has a semi-direct product. 
     
In  \ref{Q3}, one checks that the given generators of $J$ match the generators and relations of $Q_{r+2}$; it follows as before that there is a semi-direct product decomposition.

For \ref{Q4}, normality of $J$ in the case of  $m$ odd would imply $G$ is abelian, which is never true.  For $m$ even, normality of $J$ implies that the conjugate $S\cdot S^{\frac{m+2}{2}}T\cdot S^{-1}$ is of the form $S^{iq(m+1)}\cdot S^{\frac{m+2}{2}}T$, for some $i$.  A calculation shows this is equivalent to $S^{(m+1)(qi-2)}=I$, so $2^{r+1}q$ divides $qi-2$.  Since $q$, which is odd, divides $2$, we have $q=1$, and one can set $i=2$.   One easily checks that the $T$-conjugate of $S^{\frac{m+2}{2}}T$ is also in $J$.
   \end{proof}

\begin{remark} The groups $G$, with a fixed-point free representation, have the familiar property (seen for instance in \cite{wolf}) that odd order 
Sylow subgroups are cyclic, and the $2$-Sylow is either cyclic or contains an index two cyclic subgroup.  An avid reader might  wish to locate the groups above in the complete chart in \cite[Section~7.2]{wolf}.

\end{remark}

\subsection{The equations}\label{sec:equations}
\mbox{}

The representation above of $G$ acting on $\C^4$ has its contragredient representation acting on the coordinate functions $x,y,z,w$, via
$$S(x,y,z,w)=(\omega^{-1}x,\ \omega^{2m+1}y,\ \omega^{-(2m+1)}z,\ \omega w)$$
$$T(x,y,z,w)=(\zeta^{-1}y,\ x,\ w,\ \zeta z).$$
\begin{proposition} The group $G$ acts freely on $\C^4$ off the origin, leaves invariant the hypersurface singularity 
$$\mathcal Y=\{zw+x^{2m}+\zeta y^{2m}=0\}\subset \C^4,$$ and fixes the polynomial
$$f(x,y,z,w)=xw+yz.$$ 
Thus $f:\mathcal Y/G\rightarrow \C$ is a smoothing of the $G$-quotient of the isolated complete intersection singularity $Y=\mathcal Y \cap \{f=0\}\subset \C^4$.
\begin{proof} The only new item needed is the simple calculation that $Y$ has an isolated singularity at the origin.
\end{proof}
\end{proposition}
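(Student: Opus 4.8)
The first assertion --- that $G$ acts freely on $\C^4\setminus\{0\}$ --- is exactly (G5) above, so I would not repeat it. For the two invariance statements the plan is a straight substitution using the displayed contragredient action of $S$ and $T$ on $x,y,z,w$. Writing $g = zw + x^{2m} + \zeta y^{2m}$, I would check that $S$ multiplies each of $zw$, $x^{2m}$, $\zeta y^{2m}$ by the common scalar $\omega^{-2m}$; the one thing to verify is that the term $\zeta y^{2m}$ picks up $\omega^{2m(2m+1)}$, and $2m(2m+1)\equiv -2m\pmod N$ since $2m(2m+2)=4m(m+1)=2N$. Thus $g\circ S=\omega^{-2m}g$ and $\mathcal Y=\{g=0\}$ is $S$-stable; using $\zeta^{2m+2}=\omega^{N}=1$, hence $\zeta^{-2m}=\zeta^{2}$, one finds $g\circ T=\zeta\,g$, so $\mathcal Y$ is $T$-stable. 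For $f=xw+yz$ the weights cancel in pairs: $S$ scales $xw$ by $\omega^{-1}\omega=1$ and $yz$ by $\omega^{2m+1}\omega^{-(2m+1)}=1$, while $T$ interchanges $xw$ and $yz$; so $f$ is fixed by $S$ and $T$. Since $S,T$ generate $G$, both $\mathcal Y$ and $f$ are $G$-invariant.

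The substantive step --- the ``simple calculation'' --- is that $Y=\mathcal Y\cap\{f=0\}$, defined by $g$ and $f$, has an isolated singularity at the origin, i.e. the $2\times 2$ minors of the Jacobian matrix
\[
\begin{pmatrix} 2m\,x^{2m-1} & 2m\zeta\,y^{2m-1} & w & z \\ w & z & y & x \end{pmatrix}
\]
have, together with $g$ and $f$, only the trivial common zero. I would argue as follows. At a common zero $(x,y,z,w)\ne 0$, the minors from the column pairs $(x,w)$ and $(y,z)$ give $2m\,x^{2m}=zw$ and $2m\zeta\,y^{2m}=zw$; substituting $zw=-x^{2m}-\zeta y^{2m}$ from $g=0$ turns these into $(2m+1)x^{2m}=-\zeta y^{2m}$ and $(2m+1)\zeta y^{2m}=-x^{2m}$, and eliminating yields $\bigl((2m+1)^2-1\bigr)x^{2m}=0$. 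Since $(2m+1)^2-1=4m(m+1)\ne 0$, this forces $x^{2m}=y^{2m}=0$, so $x=y=0$; then the minors from columns $(x,z)$ and $(y,w)$ read $w^{2}=2m\,x^{2m-1}y=0$ and $z^{2}=2m\zeta\,y^{2m-1}x=0$, so $z=w=0$ --- a contradiction. Hence $\operatorname{Sing}(Y)=\{0\}$.

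To conclude, I would observe that $\mathcal Y$ is a three-dimensional (hence Cohen--Macaulay) hypersurface with an isolated singularity at the origin, since the gradient $(2m\,x^{2m-1},2m\zeta\,y^{2m-1},w,z)$ of $g$ vanishes only there; together with $\operatorname{Sing}(Y)=\{0\}$ this gives that $f\colon\mathcal Y\to\C$ is a smoothing of $Y=f^{-1}(0)$, with smooth general (Milnor) fibre $M$. As $G$ preserves $\mathcal Y$, fixes $f$, and acts freely off the origin, $f$ descends to a smoothing $f\colon\mathcal Y/G\to\C$ of $Y/G$ with Milnor fibre $M/G$. I expect no real obstacle here: everything is elementary. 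The only spot needing a little care is the choice of $2\times 2$ minors --- one must use precisely the pair built from $x^{2m}$ and $y^{2m}$, combined with $g=0$, to break the $x\leftrightarrow y$ symmetry, and then two further minors to eliminate $w$ and $z$.
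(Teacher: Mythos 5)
Your proposal is correct and follows the paper's own (one-line) proof exactly: the paper dismisses the invariance checks and the free action as already known or routine, and identifies the isolated singularity of $Y$ as the "only new item," which is precisely the Jacobian-minor computation you carry out (and your arithmetic, e.g.\ $2m(2m+1)\equiv -2m \pmod N$, $\zeta^{-2m}=\zeta^2$, and $(2m+1)^2-1=4m(m+1)\neq 0$, all checks out).
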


The map $f:\mathcal Y \rightarrow \C$ gives a smoothing of $Y$.  By Hamm-L\^{e} (e.g., \cite{hamm}), the Milnor fibre $M=f^{-1}(\delta)$ is simply connected.  The Euler characteristic can be computed from the Greuel-Hamm formula \cite{gh} for weighted homogeneous complete intersections, yielding $$\chi(M)=1+\mu=4m(m+1).$$
The group $G$ acts freely on $\mathcal Y\setminus\{0\}$ and  $M$.  As $\chi(M)=|G|$, $M/G$ has Euler characteristic $1$, hence is a rational homology disk whose fundamental group is isomorphic to $G$.
 
 \begin{proposition} The map $f:\mathcal Y/G\rightarrow \C$ gives a rational homology disk ($\qhd$) smoothing of the singularity $Y/G$, whose Milnor fibre has non-abelian fundamental group $G$.
 \end{proposition}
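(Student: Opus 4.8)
The plan is to combine the facts already assembled in this section with the group theory of \S\ref{sec:group}. First I would check that $\bar f\colon\mathcal Y/G\to\C$ really qualifies as a smoothing and identify its Milnor fibre. The hypersurface $\mathcal Y=\{zw+x^{2m}+\zeta y^{2m}=0\}$ has singular locus cut out by $z=w=x^{2m-1}=y^{2m-1}=0$, hence an isolated singularity at the origin; since $G$ acts freely on $\mathcal Y\setminus\{0\}$, the quotient $\mathcal Y/G$ is again a three-dimensional isolated Cohen--Macaulay singularity (a quotient of a Cohen--Macaulay germ by a finite group in characteristic $0$), and the $G$-invariant $f=xw+yz$ descends to $\bar f$ with zero fibre $Y/G$. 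As the diagonal $G$-action is compatible with the weighted-homogeneous $\C^*$-structure, $\mathcal Y/G$ and $\bar f$ are weighted homogeneous, so $\bar f$ admits a global Milnor fibration whose fibre over $\delta$ is $f^{-1}(\delta)/G=M/G$; here I use that $M=f^{-1}(\delta)$ lies in $\mathcal Y\setminus\{0\}$, where $G$ acts freely.

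Next I would read off the topology of $M/G$. By Hamm--Lê the fibre $M$ is simply connected, and the Greuel--Hamm formula gives $\chi(M)=1+\mu=4m(m+1)=|G|$, both recorded above. Since $G$ is finite and acts freely on the manifold $M$, the projection $M\to M/G$ is a covering with deck group $G$; hence $\chi(M/G)=\chi(M)/|G|=1$, so $M/G$ is a rational homology disk, and, $M$ being simply connected, it is the universal cover of $M/G$, whence $\pi_1(M/G)\cong G$.

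It remains to observe that $G$ is non-abelian, which is immediate: by property (G2) its abelianization has order $4(m+1)$, strictly less than $|G|=4m(m+1)$ for every $m\ge2$; equivalently, the relation $TST^{-1}=S^{-(2m+1)}$ cannot collapse to $TST^{-1}=S$, since that would force $S^{2m+2}=I$, i.e.\ $2m(m+1)\mid 2(m+1)$, impossible for $m\ge2$. I expect no real obstacle: the proposition is a bookkeeping consequence of the freeness of the action, the simple connectivity of $M$, the Euler-characteristic count, and the structure of $G$. The single point meriting care is the smooth identification of the Milnor fibre of $\bar f$ with $M/G$, which rests on weighted homogeneity; the genuinely substantive task --- showing $Y/G$ has resolution graph of type $\mathcal B_2^3(p)$ --- lies elsewhere and is not needed here.
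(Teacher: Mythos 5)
Your proposal is correct and follows the same route as the paper: Hamm--L\^e for simple connectivity of $M$, the Greuel--Hamm formula giving $\chi(M)=4m(m+1)=|G|$, the freeness of the $G$-action making $M\to M/G$ the universal cover, and hence $\chi(M/G)=1$ and $\pi_1(M/G)\cong G$. The extra details you supply (isolatedness of the singularity of $\mathcal Y$, descent of $f$, and the explicit check that $G$ is non-abelian via $TST^{-1}=S^{-(2m+1)}$) are consistent with, and merely flesh out, the paper's argument.
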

The following section is devoted to the proof of the following proposition.

\begin{proposition} The singularity $Y/G$ is of type $\mathcal B_2^3(m-2).$
\end{proposition}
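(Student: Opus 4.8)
The plan is to identify the singularity $Y/G$ by computing its minimal good resolution graph and matching it with Figure~\ref{fig:b23-local} for $p=m-2$. Since $Y/G$ is weighted homogeneous (being a quotient of a weighted homogeneous ICIS by a group preserving the $\C^*$-action), its resolution graph is star-shaped, so the task reduces to determining the central curve, its self-intersection and genus, and the three (or fewer) arms together with their Seifert invariants. The most efficient route is to first understand $(Y,0)$ itself: the complete intersection $\mathcal Y\cap\{f=0\}$ where $\mathcal Y=\{zw+x^{2m}+\zeta y^{2m}=0\}$ and $f=xw+yz$. On $\mathcal Y$, eliminating one variable using the linear-in-$w$ relations (from $f=0$ we get $xw=-yz$, and from $\mathcal Y$ we get $zw=-(x^{2m}+\zeta y^{2m})$), one can try to write $Y$ as a cyclic quotient or a simpler weighted-homogeneous surface; in any case $Y$ has an explicit $\C^*$-action with weights read off from $S$ acting on $x,y,z,w$ as $(\omega^{-1},\omega^{2m+1},\omega^{-(2m+1)},\omega)$, so one obtains weights and degree for $Y$, hence its Seifert data.

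The second step is to pass to the quotient $Y/G$. Here I would use the standard description of resolutions of quotients of weighted-homogeneous (Seifert-fibered) links: the quotient $\Sigma_Y/G$ is again Seifert fibered over a quotient orbifold of the base curve of $Y$, and the Seifert invariants transform by a known recipe (the $G$-action on the base curve of the $\C^*$-fibration, branching data, and the twisting of the action on fibers). Since $G$ has order $4m(m+1)$, acts freely off the origin, and contains the cyclic $\langle S\rangle$ of order $N=2m(m+1)$ with the single extra generator $T$ of order $4$ (with $T^2=S^m$ central), it is cleanest to factor the quotient in two stages: first by $H=\langle S\rangle$ (a cyclic, hence toric/Seifert, quotient — this changes weights and the central self-intersection in a completely mechanical way), then by the order-$2$ quotient induced by $T$. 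The involution induced by $T$ on $Y/H$ must be analyzed for its fixed-point behavior on the link (free, by (G5)) and on the base orbifold — this determines how the central curve and arms of $Y/H$ fold up, producing the final graph.

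The arithmetic to watch: Figure~\ref{fig:b23-local} records (with $p=m-2$) a long arm whose continued fraction comes from $2(m)^2/(2m-1)$ — matching the stated $2(p+2)^2/(2p+3)$ — a second arm that is a single $-3$-vertex, a third arm that is a single $-4$-vertex, and a central $-4$-vertex of genus $0$; the discriminant has order $16(m+1)^2$ and $H_1(M)$ has order $4(m+1)$, consistent with (G2) and item (3) of the introduction. So after computing the Seifert invariants of $Y/G$ I would verify: the base is $\mathbb P^1$ with exactly three orbifold points of orders matching the three arms, the orbifold Euler number equals the one forced by the graph, and the individual Seifert pairs $(\alpha_i,\beta_i)$ reproduce the three continued fractions. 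A convenient independent check is to compute $|H_1(\Sigma_{Y/G})|$ (the determinant of the graph) and confirm it equals $16(m+1)^2$, and to confirm the central curve has genus $0$ (the quotient of a genus-$0$ base by $G$'s induced action, with the Riemann–Hurwitz count of branch points giving exactly three).

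The main obstacle I anticipate is the bookkeeping in the second stage: correctly tracking how the order-$2$ quotient by (the image of) $T$ acts on the Seifert data of $Y/H$ — in particular whether $T$ permutes two of the arms, fixes an arm setwise while acting on it, or fixes the central curve, and how it twists the Seifert fibration. Because $T$ swaps $x\leftrightarrow y$-type and $z\leftrightarrow w$-type coordinates (up to roots of unity), it likely interchanges the ``$\mathcal Y$ comes from $x^{2m}+\zeta y^{2m}$'' symmetry with something on the $zw$ side, so disentangling which of the three arms of the final graph arises from the $2m$-fold branching and which two small arms ($-3$ and $-4$) arise from the extra identifications will require care. An alternative that sidesteps Seifert-invariant gymnastics is to resolve $Y/G$ directly: write down explicit (quotient) equations for $Y/G$ using $G$-invariant monomials, find its weighted-homogeneous presentation, and run the standard star-shaped resolution algorithm; this is more computational but entirely mechanical once the invariant ring is in hand, and it is presumably the route the authors take given the remark that ``what takes extensive computation is the verification that the smoothed singularity is of type $\mathcal B_2^3(m-2)$.''
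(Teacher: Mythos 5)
Your overall strategy is the one the paper actually follows -- the ``standard approach'' of lifting $G$ to the Seifert partial resolution of $Y$, analyzing the quotient chart by chart, and then pinning down the central self-intersection by comparing the discriminant formula with the known value $16(m+1)^2$. But as written the proposal is a plan, not a proof: every step that carries actual content is deferred. Concretely, what the paper has to establish (and what you would have to establish) is: (i) the weighted blow-up of $\C^4$ with weights $(1,1,m,m)$ induces a Seifert partial resolution $\mathcal S\to Y$ whose central curve $C$ is a smooth hyperelliptic curve of genus $m$ carrying two $A_{m-1}$ singularities of $\mathcal S$; (ii) on the chart where $x$ is inverted, the only points of $C$ with nontrivial isotropy in $\bar G=G/\langle S^{2m}\rangle$ form two orbits, each with isotropy $\Z/4$ acting on the tangent space as scalar multiplication by a primitive fourth root of unity, so the quotient acquires exactly two $\frac14[1,1]$ points, i.e.\ two $(-4)$ arms; (iii) $T$ interchanges the two $A_{m-1}$ points, and the $\langle S\rangle$-quotient at one of them is the cyclic singularity $\frac{1}{2m^2}[-(2m+1),1]$, whose resolution is the long arm; and (iv) the image of $C$ is rational. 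None of this follows from the general Seifert-quotient formalism you invoke without doing the fixed-point and local-action computations; your own closing paragraph concedes that the decisive bookkeeping (how $T$ acts on the arms of $Y/\langle S\rangle$) is left open. Your suggestion to ``eliminate one variable'' to simplify $Y$ is also not developed and is not needed once one blows up with the correct weights.

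There is also a concrete misreading of the target graph that would derail the final matching step. In Figure~\ref{fig:b23-local} the central (valence-$3$) vertex is the \emph{unlabelled} one, hence has self-intersection $-2$ (this is exactly the conclusion $d=2$ drawn from the discriminant computation); the two short arms are each a single $-4$ vertex, and the $-3$ vertex is not a separate arm but sits inside the long arm, whose outward continued fraction is $-(p+3),-3,-2,\dots,-2$, i.e.\ $2(p+2)^2/(2p+3)$. Your description (``a central $-4$-vertex of genus $0$,'' one arm a single $-3$ and one arm a single $-4$'') does not match this, so the Seifert data you would try to verify against is wrong. Correct the target, and then actually carry out the isotropy analysis in (ii)--(iii); the discriminant check you propose at the end is fine and is precisely how the last unknown, the central self-intersection, is determined.
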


A priori, one knows the quotient is a rational singularity with discriminant the square
$[4(m+1)]^2$.  The standard approach (e.g., [12] or [17]) is to lift the action of $G$ from $Y$ to its \emph{Seifert partial resolution} $\mathcal S\rightarrow Y$, the result of weighted blow-up, which has a smooth central curve $C$ along which are cyclic quotient singularities.  The quotient $\mathcal S/G$ will be the Seifert resolution of $Y/G$.  
The resolution space $\mathcal S$ has a covering $\{\mathcal S_i\}$ by $4$ open affines, corresponding to weighted inversion of the coordinates.   One identifies singular points and fixed points of the action of $G$ along $C$ on each affine, as well as on certain partial quotients.  At the end, one finds three singular points on a rational curve, whose self-intersection on its minimal resolution is computable from knowledge of the discriminant.

\subsection{Resolution of \texorpdfstring{$Y/G$}{Y/g}}
\mbox{}

 If $\C^t$ has coordinates $z_i$ with positive integer weights $n_i$ (without common divisor), the weighted blow-up is a map $\mathcal U\rightarrow \C^t$, with fibre over the origin the weighted projective space $\mathbb P_{\textbf{n}}=\mathbb{P}_{(n_1,\cdots, n_t)}$. The space $\mathcal U$ has an open affine covering $U_i$, each of which is a quotient of an affine space $V_i$ by a cyclic group of order $n_i$.  For instance, $V_1$ has coordinates $A_1, \dots ,A_t$, related to the $z_i$ via 
$$
z_1=A_1^{n_1},\ z_2=A_1^{n_2}A_2,\dots,\ z_t=A_1^{n_t}A_t;
$$
the quotient $U_1$ equals $V_1$ modulo the action on the $A_i$'s of the cyclic group generated~by  
$$
\frac{1}{n_1}[-1,\ n_2,\dots,\ n_t].
$$  
Weighted blow-up of $\mathbb C^4$, with coordinates $x,y,z,w$ and weights $1,1,m,m$,  induces a weighted blow-up $\mathcal S \rightarrow Y$, covered by $4$ affines $\mathcal S_i$.  The exceptional fibre is a smooth projective curve $C\subset \mathbb P_{\textbf{m}}=\mathbb P_{(1,1,m,m)}$.

Inverting first $x$,  $\mathcal S_1$ has coordinates  $$x=A_1, \ y=A_1A_2,\ z=A_1^{m}A_3,\ w=A_1^{m}A_4,$$  with equations
\begin{align*}
A_4=&-A_2A_3\\
A_3A_4&=-(1+\zeta A_2^{2m}).
\end{align*}
Thus in coordinates $A_1, A_2, A_3$,  $\mathcal S_1$  is defined by 
$$
A_2A_3^2-1-\zeta A_2^{2m}=0,
$$
while $C$ is given by  $A_1=0$.  Both the surface and the curve are smooth.  
Rewriting  $C$ as 
$$
(A_2A_3)^2=A_2(1+\zeta A_2^{2m}),
$$ 
its function field is a double cover of the affine line branched at $2m+1$ points; so, $C$ is 
a hyperelliptic curve of genus $m$.

The group $G$ lifts to a group of automorphisms of $\mathcal S$.  On $\mathcal S_1$, 
$$
S(A_3)=S(z x^{-m})=(\omega^{-(2m+1)}z) (\omega^{-1}x)^{-m}=\omega^{-(m+1)}z x^{-2m}=\omega^{-(m+1)}A_3,
$$ 
while 
$$
T(A_3)=T(z) T(x)^{-m}=w (\zeta^{-1}y)^{-m}=\zeta^mA_4 A_2^{-m}=\zeta^{-1}A_3 A_2^{1-m}.
$$
We summarize as 
\begin{align*}
S(A_1,A_2,A_3)=&(\omega^{-1}A_1,\ \omega^{2m+2}A_2,\ \omega^{-(m+1)}A_3)\\
T(A_1,A_2,A_3)=&(\zeta^{-1}A_1A_2,\ \zeta A_2^{-1}, \ \zeta^{-1} A_3 A_2^{1-m}).
\end{align*}
The group $G$ acts on $\mathcal S_1$ ($A_2$ is never $0$ there).  Note $S^{2m}$ is a pseudo-reflection, sending $A_1$ to $\omega^{-2m}A_1$, leaving $A_2$ and $A_3$ fixed. Then $\bar{\mathcal S}_1=\mathcal S_1/\langle S^{2m}\rangle$ has as coordinates the invariants  $A_1^{m+1}\equiv \bar{A_1}$, $A_2$, and $A_3$, with equation $$A_2A_3^2=1+\zeta A_2^{2m}$$ and central curve $C$ defined by $\bar{A_1}=0.$  Then $\bar{G}=G/\langle S^{2m}\rangle$ 
acts on $\bar{\mathcal S_1}$ as follows: Let $\eta=\omega^{-(m+1)}$,  a primitive $(2m)^{\text{th}}$ root of $1$.  Then
\begin{align*}
S(\bar{A_1},A_2,A_3)=&(\eta \bar{A_1},\ \eta^{-2} A_2,\ \eta A_3)\\
T(\bar{A_1},A_2,A_3)=&(-\bar{A_1}A_2^{m+1},\ \zeta A_2^{-1},\ \zeta^{-1}A_3 A_2^{1-m}).
\end{align*}
We describe all fixed points of elements of $\bar{G}$ and their orbits.

First,  $S^{m}$ fixes all points of the form $(0,a,0)$, where $a$ is a $(2m)^{th}$ root of $-\zeta^{-1}=\zeta^m$.  Defining a square root of $\zeta$ by $\tau^2=\zeta$,  $a$ is of the form $\tau \eta^k$, $k=0,\cdots,2m-1$.  These $2m$ fixed points are permuted by powers of $S$, which sends for instance $a=\tau \eta^k$ to $\tau\eta^{k+2}$.  In particular, there are $2$ $G$-orbits of these fixed points, corresponding to $a=\tau$ and $a=\tau\eta$.  
      
A calculation shows that
$$
S^kT(\bar{A_1}, A_2,A_3)=(\eta^{m-k}\bar{A_1}A_2^{m+1},\ \zeta\eta^{2k} A_2^{-1},\ \zeta^{-1}\eta^{-k}A_3 A_2^{1-m})
$$
has two fixed points as above, where $a=\pm \tau \eta^k$; note that in $\bar{G}$, $(S^kT)^2=S^{m}$.  Thus the isotropy subgroup of $\bar{G}$ at $a=\tau$ is the cyclic group of order $4$ generated by $T$, and similarly the isotropy group at $a=\tau\eta$ is generated by $ST$.  At the point $(0,\tau,0)$, generators for the local ring are $\bar{A_1}$ and $A_3$, with $C$ given by $\bar{A_1}=0$.  As
$$
T(\bar{A_1},A_2,A_3)=(-\bar{A_1}A_2^{m+1},\ \zeta A_2^{-1},\ \zeta^{-1}A_3 A_2^{1-m}),
$$
the action on the tangent space when $A_2=\tau$ is checked to be scalar multiplication by $\tau^{-(m+1)}$, which is a primitive $4^{th}$ root of $1$.  The quotient is the singularity~$\frac{1}{4}[1,1]$ whose minimal resolution is a single smooth rational curve of self-intersection $-4$.   The same calculation holds at the fixed point when $A_2=\tau\eta$.  These two orbits are the only fixed points of $\bar{G}$ on $\bar{\mathcal S}_1$, so the quotient has two $(-4)$-singularities along the central curve.  The rationality of the central curve is known because one has a $\Q$HD smoothing, or can be seen directly as follows:
Invariants of $S^m$ on $\bar{\mathcal S_1}$ are $M=\bar{A_1}^2, N=\bar{A_1}A_3, P=A_3^2$, and $A_2$, so the quotient is defined by equations 
$$
MP=N^2, \ A_2P=1+\zeta A_2^{2m}.
$$ 
The image of  $C$ is given by $M=N=0$ and the plane curve $\ A_2P=1+\zeta A_2^{2m}$, which is clearly rational ($P$ is a function of $A_2$).  

 So, the quotient $\mathcal S_1/G$ consists of a surface with exactly two $(-4)$-singularities along a rational curve.

Since $\mathcal S_1 \cap C$ consists of all points of $C$ with first quasi-homogeneous coordinate non-$0$, there remains to consider only the behavior of $\mathcal S$ and the group action near the other points of $C$, namely $[0:0:1:0]_{\textbf{m}}$ and $[0:0:0:1]_{\textbf{m}}$.  As $T$ permutes these points, we need only look at the action of $\langle S\rangle$ on $\mathcal S_3$ near the first of these.

So, consider the weighted blow-up from inverting $z$.  One has an affine space with coordinates 
$$
z=B_1^{m}, \ x=B_1B_2,\ y=B_1B_3,\ w=B_1^{m}B_4,
$$
divided by the diagonal action of 
$$
\frac{1}{m}[-1,1,1,m].
$$  
The proper transforms of the two equations defining $\mathcal{X}$ yield
$$
B_2B_4+B_3=0,\ B_4+B_2^{2m}+\zeta B_3^{2m}=0,
$$
which define a non-singular surface $\mathcal S_3'$, given in coordinates $B_1,B_2,B_4$ by 
$$
B_4+B_2^{2m}(1+\zeta B_4^{2m})=0,
$$ 
with central curve given by $B_1=0$.
To reach the surface $\mathcal S_3$, one divides by the diagonal action, giving invariants $B_1^m =z$, $B_1B_2=x$, $B_2^m\equiv M$, and $B_4$, now satisfying 
$$
zM=x^m,\ B_4+M^2(1+\zeta B_4^{2m})=0,
$$ 
with central curve $z=x=0$.
Thus, $\mathcal S_3$ has a singularity of type $A_{m-1}$ at the origin, corresponding to the point $[0:0:1:0]_{\textbf{m}}$ of $C$.  
We conclude that the Seifert resolution $\mathcal S$ of the singularity consists of a central hyperelliptic curve of genus $m$ along which are two $A_{m-1}$ singularities. 

Note $S$ lifts to an action on $\mathcal S_3$, calculated to be
$$
S(z,x,M,B_4)=(\omega^{-(2m+1)}z,\ \omega^{-1}x,\ \omega^{m+1}M,\ \omega^{2m+2}B_4).
$$
To complete the description of $\mathcal S/G$, it suffices to consider the quotient of $\mathcal S_3$ by $\langle S\rangle$ at the singular point.

For this purpose, it is easier to extend $S$ to the $m$-fold cover $\mathcal S_3'$, which is smooth.  In the relevant coordinates $B_1, B_2$, and $B_4$
define 
$$
\bar{S}=\frac{1}{mN}[-(2m+1),\  m+1, \ 2m(m+1)].
$$
This extension has the property that $\bar{S}^N=\frac{1}{m}[-1,1,0]$, so dividing gives $\mathcal S_3$; 
and $\bar{S}^m=\frac{1}{N}[-(2m+1), m+1,0]$ gives the same action above of $S$ on the coordinates $z=B_1^m, x=B_1B_2, M=B_2^m,$ and $B_4$ of $\mathcal S_3$. 

So, it suffices to decipher the action of $\bar{S}$ on $\mathcal S_3'$.  Now, 
$$
\bar{S}^{2m^2}=\frac{1}{m+1}[1,0,0]
$$ 
is a pseudoreflection, multiplying $B_1$ by $\omega^{2m}$ and fixing $B_2$ and $B_4$.  Dividing by the group it generates and letting  $\bar{B_1}=B_1^{m+1}$, one has coordinates $\bar{B_1}, B_2, B_4$, the same equation as before (with $\bar{B_1}=0$ the central curve), and group action 
$$
\bar{S}=\frac{1}{2m^2}[-(2m+1),\ 1,\ 2m]
$$ 
on a smooth surface.  This action is free except that $\bar{S}^m$ has fixed points when the first two coordinates are $0$ (and hence so is the last).  Local coordinates at this point are $\bar{B_1}$ and $B_2$, and the group action is $\frac{1}{2m^2}[-(2m+1),\ 1]$.  To describe the resolution of this cyclic quotient singularity on $\mathcal S/G$ in relation to the curve which is the image of $\bar{B_1}=0$, we apply the following well-known result.  

\begin{lemma}[{\cite[pp.~9,10]{laufer-book}}] Consider the cyclic action $(x,y)\mapsto (\mu x, \mu^q y)$ on $\C^2$, where $\mu$ is a primitive $n^{th}$ root of $1$,  $0<q<n$, $(q,n)=1$.  Write the continued fraction expansion
\[
\frac{n}{q}=a_1-\cfrac{1}{a_2-\cfrac{1}{\ddots\ -\cfrac{1}{a_{s-1}-\cfrac{1}{a_s}}}}.
\]
Then the minimal resolution graph of the quotient is 
\begin{center}
\begin{tikzpicture}[scale=1.5]
\foreach \a in {0,...,3}
{
\coordinate (A\a) at (\a,0);
}
\draw (A0)--(A1);
\draw[dashed] (A1)--(A3);
\foreach \a in {0,1,3}
{
\fill (A\a) circle [radius=.1];
}
\node[above=2pt] at (A0) {$-a_1$};
\node[above=2pt] at (A1) {$-a_2$};
\node[above=2pt] at (A3) {$-a_s$};
\end{tikzpicture}
\end{center}
and the curve on right intersects transversally the proper transform of the image of $x=0$.
\end{lemma}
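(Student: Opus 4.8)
The plan is to realize the quotient as an affine toric surface and read its minimal resolution off the fan; the Hirzebruch--Jung algorithm then delivers the chain shape, the weights $-a_i$, and the intersection with $\{x=0\}$ all at once.

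First I would set up the toric model. Because $\mathbb{Z}/n$ acts on $\mathbb{C}^2$ through a finite subgroup of the torus $(\mathbb{C}^*)^2$, the quotient $X=\mathbb{C}^2/(\mathbb{Z}/n)$ is the affine toric variety of the cone $\sigma=\mathbb{R}_{\ge0}e_1+\mathbb{R}_{\ge0}e_2$ taken with respect to the overlattice $N=\mathbb{Z}^2+\mathbb{Z}\cdot\tfrac1n(1,q)$ of $\mathbb{Z}^2$. Using the basis $f_1=\tfrac1n(1,q)$, $f_2=e_2$ of $N$, the cone is generated by the primitive vectors $w_0:=(0,1)$ and $w_{s+1}:=(n,-q)$ (the latter representing $e_1=nf_1-qf_2$), and a one-line monomial computation shows that the image of the axis $\{x=0\}\subset\mathbb{C}^2$ is the torus-invariant divisor $D_{w_{s+1}}$ attached to the ray through $(n,-q)$.

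Next I would run the Hirzebruch--Jung subdivision. There are unique primitive vectors $w_1,\dots,w_s$ in $\sigma$, lying in cyclic order between $w_0$ and $w_{s+1}$ on the part of the boundary of $\operatorname{conv}\bigl((\sigma\cap N)\setminus\{0\}\bigr)$ facing the origin, such that each consecutive pair $w_{i},w_{i+1}$ is a $\mathbb{Z}$-basis of $N$ and $w_{i-1}+w_{i+1}=a_iw_i$ for integers $a_i\ge 2$; a short induction on $i$, using $\det(w_i,w_{i+1})=1$ and this recursion, identifies $[a_1,\dots,a_s]=n/q$ as the Hirzebruch--Jung continued fraction. Inserting the rays $\mathbb{R}_{\ge0}w_i$ refines $\sigma$ into a smooth fan, so the resulting toric morphism $\widetilde X\to X$ is a resolution, whose exceptional curves are $E_i:=D_{w_i}\cong\mathbb{P}^1$. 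By the toric intersection dictionary, $E_i$ and $E_j$ ($i\ne j$) meet iff $|i-j|=1$, and then transversally in a single point, so $E_1-E_2-\cdots-E_s$ is a chain; the self-intersection formula $w_{i-1}+w_{i+1}+E_i^2\,w_i=0$ gives $E_i^2=-a_i$, and since every $a_i\ge 2$ there is no $(-1)$-curve among them, so this resolution is minimal. Finally, the strict transform of $D_{w_{s+1}}=\{x=0\}$ meets only the exceptional curve coming from the adjacent ray, namely $E_s$ --- the rightmost vertex, of weight $-a_s$ --- and does so transversally, since on the smooth toric surface $\widetilde X$ the invariant divisors $D_{w_s}$ and $D_{w_{s+1}}$ cross transversally at the torus-fixed point of the cone they span. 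This is exactly the assertion.

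The only step carrying genuine content, as opposed to combinatorial bookkeeping, is the identification of the fan data with the continued fraction $n/q$ together with the bounds $a_i\ge 2$ ensuring minimality; both are classical facts about the Hirzebruch--Jung algorithm. One could instead argue by induction on $s$ via iterated blow-ups of $\mathbb{C}^2$, noting that the $\mathbb{Z}/n$-action descends and in the affine chart with coordinates $(x,y/x)$ becomes $\tfrac1n(1,q-1)$, and tracking how the successive exceptional lines and the strict transform of $\{x=0\}$ sit; but that route is fussier because of the two charts appearing at each stage, so I would prefer the toric count above.
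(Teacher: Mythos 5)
Your argument is correct, including the one point where an error could realistically creep in: the identification of which end of the chain meets the image of $\{x=0\}$. Writing the cone for $\tfrac{1}{n}(1,q)$ on the lattice $\Z^2+\Z\cdot\tfrac1n(1,q)$ and changing basis as you do, $\{x=0\}$ does map to the divisor of the ray through $(n,-q)$, and the Hirzebruch--Jung vectors ordered from $(0,1)$ to $(n,-q)$ produce $n/q=[a_1,\dots,a_s]$ with $E_s$ adjacent to that ray; I checked this against $\tfrac15(1,2)$ and it comes out as you claim. The minimality argument ($a_i\ge 2$ because each $w_i$ is a vertex of the convex hull, hence no exceptional $(-1)$-curves) and the transversality statement on the smooth toric chart are both sound. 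Note, however, that the paper does not prove this lemma at all: it is quoted verbatim from Laufer's book, where the resolution is built by the classical Hirzebruch--Jung method of explicitly gluing affine charts with transition functions governed by the continued fraction. Your toric-fan computation is the standard modern repackaging of exactly that construction --- same combinatorics, cleaner bookkeeping --- so it is a legitimate self-contained substitute for the citation rather than a genuinely new idea; its main advantage is that the adjacency of the strict transform of $\{x=0\}$ with the $-a_s$ curve, which is the part of the statement the paper actually relies on, falls out of the fan picture with no extra work.
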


Combining what has already been proved about $\mathcal S/G$, one knows the resolution graph of the minimal resolution of $Y/G$, except for the self-intersection $-d$ of the central curve.  The standard calculation of the order of the discriminant group involves $d$ and the outward continued fraction expansions from the center; in this case, it is 
$$
4\cdot 4\cdot 2m^2\left[d-\frac{1}{4}-\frac{1}{4}-\frac{2m^2-2m-1}{2m^2}\right].
$$  
Comparing with the known discriminant value of $16(m+1)^2$, one concludes that $d=2$.  One therefore has the graph $\mathcal B_2^3(m-2).$

\begin{remark}  Assuming the Bhupal-Stipsicz Theorem,  one can by process of elimination conclude that $Y/G$ has type $\mathcal B_2^3$.  For, excluding types $\mathcal C^3$ and $\mathcal B^3$, the only example with non-abelian group and quotient smoothing of a complete intersection singularity is type $\mathcal B^4(p)$.  However, its fundamental group is metacyclic, and the commutator subgroup has index and order incompatible with our $G$.   To rule out $\mathcal C^3$, either use the other theorems in this paper; or, note that $Y/G$ has a graded involution of order $2$ (using a ``square root'' of $S$), whereas neither $\mathcal C^3$ type could have such a symmetry.
\end{remark}

\section{Zariski-van Kampen computations}

\subsection{Fundamental group of the complement of a line arrangement}\label{sec:arrangement}
\mbox{}

Let $\mathcal{L}$ be the projective line arrangement of $\mathbb{P}^2$ in Figure~\ref{fig:arrangement}
(the line at infinity is not part of $\mathcal{L}$), which is the curve~$D$ of~\cite{jf}
for $\mathcal{B}_2^3(p)$. We are going to use the Zariski-van Kampen method
to compute $\pi_1(\mathbb{P}^2\setminus\mathcal{L})$ together with precise descriptions
of meridians close to the singular points, in order to be able to find meridians
of the exceptional components of successive blowing-ups.

Let us denote with small letters 
the standard meridians in the vertical dotted line of Figure~\ref{fig:arrangement}. Let $\mathcal{A}:=\{\ell_1,\ell_2,\ell_3,\ell_4,a_1,a_2,a_3\}$.
We denote the multiple points as $P_{ij}:=L_i\cap L_j$ and $R_i:=A_j\cap A_k$
(where $\{i,j,k\}=\{1,2,3\}$).

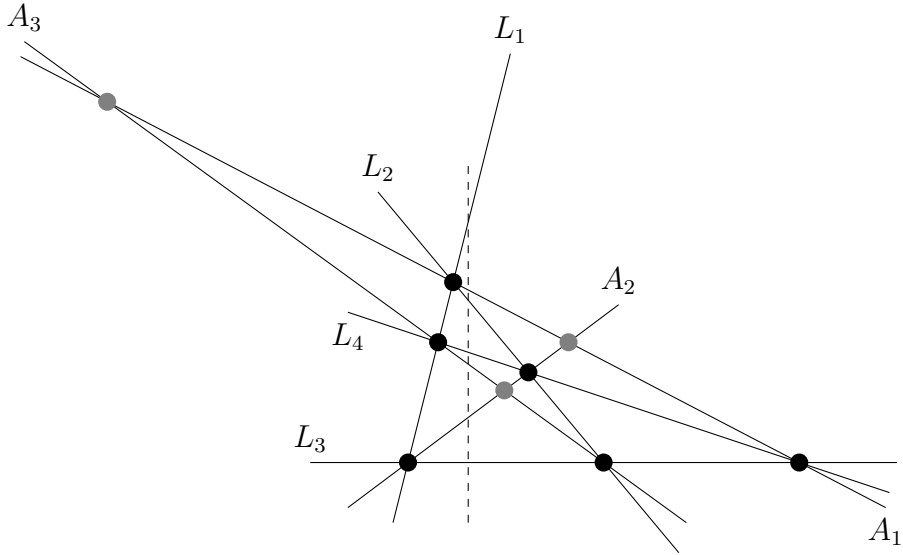
\begin{figure}[ht]
\centering
\begin{tikzpicture}[scale=.8]
\def\ai{.5}
\def\bi{2.4}
\def\aii{1}
\def\bii{2}
\def\aiii{.5}
\def\biii{1.5}
\def\aiv{1}
\def\biv{4}
\coordinate (P13) at (-1.5,-1);
\coordinate (P14) at (-1,1);
\coordinate (P23) at (1.75,-1);
\coordinate (P24) at (.5,.5);

\draw[name path=L1] (${1+\ai}*(P13)-\ai*(P14)$) -- (${1+\bi}*(P14)-\bi*(P13)$)
node[above] {$L_1$};
\draw[name path=L2] (${1+\aii}*(P23)-\aii*(P24)$) -- (${1+\bii}*(P24)-\bii*(P23)$) node[above] {$L_2$};
\draw[name path=L3] (${1+\aiii}*(P13)-\aiii*(P23)$) -- (${1+\biii}*(P23)-\biii*(P13)$)node[above, pos=0] {$L_3$};
\draw[name path=L4] (${1+\aiv}*(P14)-\aiv*(P24)$) -- (${1+\biv}*(P24)-\biv*(P14)$) node[pos=0,below] {$L_4$};

\path [name intersections={of=L1 and L2,by=P12}];
\path [name intersections={of=L3 and L4,by=P34}];

\fill[] (P13) circle [radius=.15cm];
\fill[] (P14) circle [radius=.15cm];
\fill[] (P23) circle [radius=.15cm];
\fill[] (P24) circle [radius=.15cm];
\fill[] (P12) circle [radius=.15cm];
\fill[] (P34) circle [radius=.15cm];

\def\ci{1.25}
\def\di{.25}
\def\cii{.5}
\def\dii{.75}
\def\ciii{2.5}
\def\diii{.5}

\draw[name path=A1] (${1+\ci}*(P12)-\ci*(P34)$) -- (${1+\di}*(P34)-\di*(P12)$) node[below] {$A_1$};
\draw[name path=A2] (${1+\cii}*(P13)-\cii*(P24)$) -- (${1+\dii}*(P24)-\dii*(P13)$) node[above] {$A_2$};
\draw[name path=A3] (${1+\ciii}*(P14)-\ciii*(P23)$) -- (${1+\diii}*(P23)-\diii*(P14)$) node[above,pos=0] {$A_3$};

\path [name intersections={of=A1 and A2,by=Q3}];
\fill[gray] (Q3) circle [radius=.15cm];

\path [name intersections={of=A1 and A3,by=Q2}];
\fill[gray] (Q2) circle [radius=.15cm];

\path [name intersections={of=A2 and A3,by=Q1}];
\fill[gray] (Q1) circle [radius=.15cm];

\def\e{-.5}
\draw[dashed] (\e,-2)--(\e,4);
\end{tikzpicture}
\caption{$7$-line arrangement}
\label{fig:arrangement}
\end{figure}

Let us clarify what we mean by \emph{standard meridians}. In a punctured plane $\mathbb{C}\setminus\Delta$, where
$\Delta$ is an ordered finite set $\{z_1,\dots,z_r\}$, a geometric basis of of $\pi_1(\mathbb{C}\setminus\Delta;*)$, where $z_0\in\mathbb{R}$, $z_0\gg0$,
is a family of meridians as in Figure~\ref{fig:basis} where the product $z_r\cdot\ldots\cdot z_1$ is homotopic to the counterclockwise boundary of a big disk.

\begin{figure}[ht]
\centering
\begin{tikzpicture}[]
\tikzset{->-/.style={decoration={
  markings,
  mark=at position #1 with {\arrow[scale=1.5]{>}}},postaction={decorate}}}
\fill (5,0) circle [radius=.15] node[below=3pt] {$z_0$};
\fill (1,0) circle [radius=.15] node[below=15pt] {$z_1$};
\draw[->-=.76] (5,0) -- (1.5,0) arc [start angle=0,end angle=360, radius=.5];
\fill (-1,0) circle [radius=.15] node[below=15pt] {$z_2$};
\draw[->-=.83] (5,0) to[out=135, in=45] (-.5,0) arc [start angle=0,end angle=360, radius=.5];
\fill (-3,0) circle [radius=.15] node[below=15pt] {$z_3$};
\draw[->-=.865] (5,0) to[out=135, in=45] (-2.5,0) arc [start angle=0,end angle=360, radius=.5];
\end{tikzpicture}
\caption{Geometric basis, $r=3$}
\label{fig:basis}
\end{figure}

We follow the classical method of Zariski-van Kampen. Let us fix a base point $p:=(x_0,y_0)$, where $x_0$ is the coordinate of the dotted line in Figure~\ref{fig:arrangement},
where the standard meridians lie. We know that $\mathcal{A}$ generates $G:=\pi_1(\mathbb{P}^2\setminus\mathcal{L};p)$, and since the line at infinity is not part of $\mathcal{L}$, the following relation holds:
\begin{equation}\label{infinito}\tag{$\infty$}
1=\ell_3\cdot a_2\cdot a_3\cdot\ell_4\cdot\ell_2\cdot a_1\cdot\ell_1
\end{equation}
The rest of the relations will be found using the Zariski-van Kampen method.  In order to find them we need to enlarge the concept of standard meridian. In a vertical line $x=x_1$ we can consider also a geometric basis with base point $(x_1,y_0)$. These new meridians
can be seen as elements of $G$ if we conjugate them by a path joining $p$ and $(x_1,y_0)$ in the horizontal line $y=y_0$ which avoids 
the $x$-coordinates of the multiple points of $\mathcal{L}$. These new meridians can be written in terms of the original ones, and this is
usually the difficult part of  the Zariski-van Kampen method. The real picture in Figure~\ref{fig:arrangement} provides all the needed
information.

Let us recall how this works for double and triple points using Figure~\ref{fig:doubletriple}. In both cases the standard generators
to the left of the multiple point are expressed in terms of the generators to the right, taking into account 
the relations created by the singular point $[a,b]=1$ for a double point and $c\cdot b\cdot a=b\cdot a\cdot c =a\cdot c\cdot b$
for a triple point;
we will denote the second relation as $[c,b,a]=1$. Note that the element $e$ commutes with the involved meridians in each case. 
If we call $E$ the exceptional component of the blowing-up of the multiple point, then $e$ is a meridian of this component.

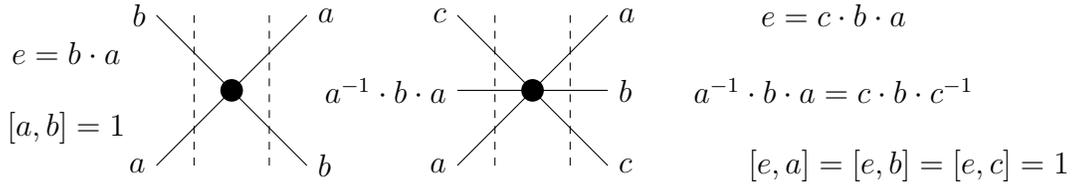
\begin{figure}[ht]
\centering
\begin{tikzpicture}
\draw (-1,-1) node[left] {$a$} --(1,1) node[right] {$a$};
\draw (-1,1) node[left] {$b$} --(1,-1) node[right] {$b$};
\fill (0,0) circle [radius=.15cm];
\draw[dashed] (-.5,-1)--(-.5,1);
\draw[dashed] (.5,-1)--(.5,1);
\node at (-2.2,.5) {$e=b\cdot a$};
\node at (-2.2,-.5) {$[a,b]=1$};

\begin{scope}[xshift=4cm]
\draw (-1,-1) node[left] {$a$} --(1,1) node[right] {$a$};
\draw (-1,0) node[left] {$a^{-1}\cdot b\cdot a$} --(1,0) node[right] {$b$};
\draw (-1,1) node[left] {$c$} --(1,-1) node[right] {$c$};
\fill (0,0) circle [radius=.15cm];
\draw[dashed] (-.5,-1)--(-.5,1);
\draw[dashed] (.5,-1)--(.5,1);
\node at (4,1) {$e=c\cdot b\cdot a$};
\node at (4,0) {$a^{-1}\cdot b\cdot a=c\cdot b\cdot c^{-1}$};
\node at (5,-1) {$[e,a]=[e,b]=[e,c]=1$};
\end{scope}
\end{tikzpicture}
\caption{Local picture at double and triple points}
\label{fig:doubletriple}
\end{figure}

Let us consider the multiple points to the left of the dotted vertical line, see Figure~\ref{fig:left}. We obtain
the following relations:
\begin{align}
\label{b12}\tag{$\rho_{12}$}
[\ell_2,a_1,\ell_1]&=1\\
\label{b14}\tag{$\rho_{14}$}
[a_3,\ell_4,\ell_1]&=1\\
\label{b13}\tag{$\rho_{13}$}
[\ell_3,a_2,\ell_1]&=1\\
\label{a2}\tag{$\rho_{2}$}
[a_3,\ell_2\cdot a_1\cdot\ell_2^{-1}]&=1
\end{align}

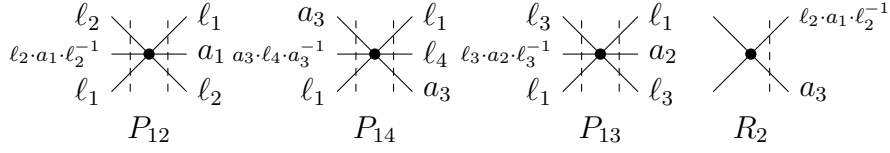
\begin{figure}[ht]
\centering
\begin{tikzpicture}[scale=.5]
\draw (-1,-1) node[left] {$\ell_1$} --(1,1) node[right] {$\ell_1$};
\draw (-1,0) node[left] {$\scriptstyle\ell_2\cdot a_1\cdot\ell_2^{-1}$} --(1,0) node[right] {$a_1$};
\draw (-1,1) node[left] {$\ell_2$} --(1,-1) node[right] {$\ell_2$};
\fill (0,0) circle [radius=.15cm];
\draw[dashed] (-.5,-1)--(-.5,1);
\draw[dashed] (.5,-1)--(.5,1);
\node at (0,-2) {$P_{12}$};
\begin{scope}[xshift=6cm]
\draw (-1,-1) node[left] {$\ell_1$} --(1,1) node[right] {$\ell_1$};
\draw (-1,0) node[left] {$\scriptstyle a_3\cdot\ell_4\cdot a_3^{-1}$} --(1,0) node[right] {$\ell_4$};
\draw (-1,1) node[left] {$a_3$} --(1,-1) node[right] {$a_3$};
\fill (0,0) circle [radius=.15cm];
\draw[dashed] (-.5,-1)--(-.5,1);
\draw[dashed] (.5,-1)--(.5,1);
\node at (0,-2) {$P_{14}$};
\end{scope}
\begin{scope}[xshift=12cm]
\draw (-1,-1) node[left] {$\ell_1$} --(1,1) node[right] {$\ell_1$};
\draw (-1,0) node[left] {$\scriptstyle\ell_3\cdot a_2\cdot \ell_3^{-1}$} --(1,0) node[right] {$a_2$};
\draw (-1,1) node[left] {$\ell_3$} --(1,-1) node[right] {$\ell_3$};
\fill (0,0) circle [radius=.15cm];
\draw[dashed] (-.5,-1)--(-.5,1);
\draw[dashed] (.5,-1)--(.5,1);
\node at (0,-2) {$P_{13}$};
\end{scope}
\begin{scope}[xshift=16cm]
\draw (-1,-1)  --(1,1) node[right] {$\scriptstyle\ell_2\cdot a_1\cdot\ell_2^{-1}$};
\draw (-1,1)  --(1,-1) node[right] {$a_3$};
\fill (0,0) circle [radius=.15cm];
\draw[dashed] (.5,-1)--(.5,1);
\node at (0,-2) {$R_2$};
\end{scope}
\end{tikzpicture}
\caption{Multiple points to the left of the dotted vertical line}
\label{fig:left}
\end{figure}

Let us continue with the multiple points to the right of the dotted line, see Figure~\ref{fig:right}. Since we always 
avoid the singular fiber running counterclockwise, the situation
of Figure~\ref{fig:doubletriple} applies, interchanging left and right.

\begin{figure}[ht]
\centering
\begin{tikzpicture}[scale=.5]
\draw (-1,-1) node[left]  {$a_2$} --(1,1) node[right] {$a_2$};
\draw (-1,1) node[left] {$a_3$} --(1,-1) node[right] {$a_3$};
\fill (0,0) circle [radius=.15cm];
\draw[dashed] (-.5,-1)--(-.5,1);
\draw[dashed] (.5,-1)--(.5,1);
\node at (0,-2) {$R_1$};
\begin{scope}[xshift=5.cm]
\draw (-1,-1) node[left] {$a_2$} --(1,1) node[right] {$a_2$};
\draw (-1,0) node[left] {$\ell_4$} --(1,0) node[right] {$\scriptstyle a_2\cdot\ell_4\cdot a_2^{-1}$};
\draw (-1,1) node[left] {$\ell_2$} --(1,-1) node[right] {$\ell_2$};
\fill (0,0) circle [radius=.15cm];
\draw[dashed] (-.5,-1)--(-.5,1);
\draw[dashed] (.5,-1)--(.5,1);
\node at (0,-2) {$P_{24}$};
\end{scope}
\begin{scope}[xshift=10cm]
\draw (-1,-1) node[left]  {$a_2$} --(1,1) node[right] {$a_2$};
\draw (-1,1) node[left] {$a_1$} --(1,-1) node[right] {$a_1$};
\fill (0,0) circle [radius=.15cm];
\draw[dashed] (-.5,-1)--(-.5,1);
\draw[dashed] (.5,-1)--(.5,1);
\node at (0,-2) {$R_3$};
\end{scope}
\begin{scope}[xshift=14cm]
\draw (-1,-1) node[left] {$\ell_3$} --(1,1) node[right] {$\ell_3$};
\draw (-1,0) node[left] {$a_3$} --(1,0) node[right] {$\scriptstyle\ell_3\cdot a_2\cdot\ell_3^{-1}$};
\draw (-1,1) node[left] {$\ell_2$} --(1,-1) node[right] {$\ell_2$};
\fill (0,0) circle [radius=.15cm];
\draw[dashed] (-.5,-1)--(-.5,1);
\draw[dashed] (.5,-1)--(.5,1);
\node at (0,-2) {$P_{23}$};
\end{scope}
\begin{scope}[xshift=22cm]
\draw (-1,-1) node[left] {$\ell_3$} --(1,1);
\draw (-1,0) node[left] {$\scriptstyle a_2\cdot\ell_4\cdot a_2^{-1}$} --(1,0) ;
\draw (-1,1) node[left] {$a_1$} --(1,-1) ;
\fill (0,0) circle [radius=.15cm];
\draw[dashed] (-.5,-1)--(-.5,1);
\node at (0,-2) {$P_{34}$};
\end{scope}
\end{tikzpicture}
\caption{Multiple points to the right of the dotted vertical line}
\label{fig:right}
\end{figure}
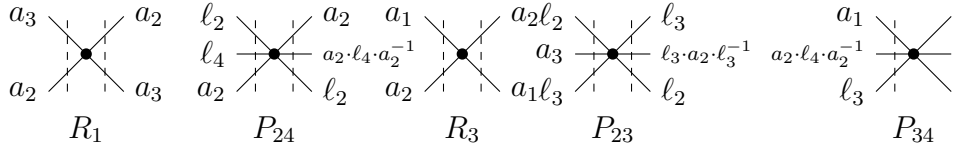

We obtain the following relations:
\begin{align}
\label{a1}\tag{$\rho_1$}
[a_2,a_3]&=1\\
\label{b24}\tag{$\rho_{24}$}
[a_2,\ell_4,\ell_2]&=1\\
\label{a3}\tag{$\rho_3$}
[a_1,a_2]&=1\\
\label{b23}\tag{$\rho_{23}$}
[\ell_3,a_3,\ell_2]&=1\\
\label{b34}\tag{$\rho_{34}$}
[\ell_3,a_2\cdot\ell_4\cdot a_2^{-1},a_1]&=1
\end{align}

Let us summarize these computations.

\begin{proposition}
The group $G=\pi_1(\mathbb{P}^2\setminus\mathcal{L})$ is generated by $\mathcal{A}$ with relations
\eqref{infinito}, \eqref{b12}, \eqref{b13}, \eqref{b14}, \eqref{b23}, \eqref{b24},
\eqref{b34}, \eqref{a1}, \eqref{a2}, and \eqref{a3}.
\end{proposition}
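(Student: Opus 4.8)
The plan is to apply the Zariski--van Kampen theorem directly to the line arrangement $\mathcal{L}$ of Figure~\ref{fig:arrangement}, projecting onto the first coordinate. Since the line at infinity is \emph{not} part of $\mathcal{L}$, the generic fiber gives a generating set $\mathcal{A}=\{\ell_1,\dots,\ell_4,a_1,a_2,a_3\}$ together with the single relation~\eqref{infinito} coming from the product of all meridians being trivial in the complement (the class of the fiber over a point near infinity bounds). The remaining relations are precisely those produced by transporting the geometric basis past each singular fiber, i.e.\ one batch of relations per multiple point of $\mathcal{L}$. So the content of the proposition is just the bookkeeping: list the multiple points, and for each one record the braid monodromy relation it imposes.

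The key steps, in order: (1) Identify all the singular fibers and the multiple points lying in them --- here the $P_{ij}=L_i\cap L_j$ (six nodes among the four $L$-lines, though arranged so several are double points) and the $R_i=A_j\cap A_k$ (three nodes among the three $A$-lines), noting which of these occur to the left or to the right of the base vertical line $x=x_0$. (2) For each double point use the local model $[a,b]=1$ of Figure~\ref{fig:doubletriple}, and for each triple point use the local model $[c,b,a]=1$, being careful that when the point sits to the left of the base fiber one conjugates the ``far'' meridians by the appropriate path, which is exactly how the conjugated generators like $\ell_2 a_1\ell_2^{-1}$ in Figure~\ref{fig:left} arise. (3) Collect the resulting relations: from the left-hand points one gets \eqref{b12}, \eqref{b13}, \eqref{b14}, \eqref{a2}; from the right-hand points one gets \eqref{a1}, \eqref{a3}, \eqref{b23}, \eqref{b24}, \eqref{b34}. (4) Invoke Zariski--van Kampen to conclude that $\pi_1(\mathbb{P}^2\setminus\mathcal{L})$ is generated by $\mathcal{A}$ subject to exactly these relations plus~\eqref{infinito}, which is the statement.

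The only real subtlety --- and the step most prone to error --- is getting the conjugations right when transporting meridians across the base fiber and through consecutive singular fibers: one must fix a consistent convention for which side of each singular fiber the loop passes (the excerpt fixes ``always avoid the singular fiber running counterclockwise''), and then track how each generator is modified as the vertical line sweeps from $x_0$ leftward and rightward. This is where a sign or an inverse can slip in, and it is why the figures carefully display the conjugated labels such as $a_3\ell_4 a_3^{-1}$, $\ell_3 a_2\ell_3^{-1}$, and $a_2\ell_4 a_2^{-1}$; verifying these against the real picture of Figure~\ref{fig:arrangement} is the substance of the argument. Everything else --- that the listed points are all the multiple points, and that no further relations appear over the regular fibers --- is immediate from the standard form of the theorem.

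\begin{proof}
Project $\mathbb{P}^2$ to $\mathbb{P}^1$ by the first coordinate; the vertical line $x=x_0$ of Figure~\ref{fig:arrangement} is a generic fiber, on which the meridians $\mathcal{A}=\{\ell_1,\ell_2,\ell_3,\ell_4,a_1,a_2,a_3\}$ form a geometric basis of the free group $\pi_1$ of the punctured affine line. By the Zariski--van Kampen theorem, $G=\pi_1(\mathbb{P}^2\setminus\mathcal{L};p)$ is the quotient of this free group by the relations obtained by transporting the basis around the singular fibers, together with the relation that the product of all the meridians is trivial. The latter is the relation~\eqref{infinito}, coming from the fact that the line at infinity does not belong to $\mathcal{L}$: the product of all seven meridians, read in the cyclic order in which the corresponding lines meet the fiber near infinity, bounds a disk in the complement, giving
\[
1=\ell_3\cdot a_2\cdot a_3\cdot\ell_4\cdot\ell_2\cdot a_1\cdot\ell_1 .
\]

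The singular fibers are exactly those through the multiple points of $\mathcal{L}$. Among the four lines $L_i$ we have the intersection points $P_{12},P_{13},P_{14},P_{23},P_{24},P_{34}$, and among the three lines $A_i$ the points $R_1,R_2,R_3$; the point configuration in Figure~\ref{fig:arrangement} shows that each of these is a double point except for $P_{12}$, $P_{13}$, $P_{14}$, $P_{23}$, $P_{24}$, $P_{34}$, which lie on one $L$-line and one $A$-line and become triple points when one records the extra branch through them. Four of these points ($P_{12},P_{13},P_{14},R_2$) lie to the left of the base fiber $x=x_0$, and the remaining ones ($R_1,R_3,P_{24},P_{23},P_{34}$) lie to its right.

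For a double point the local model of Figure~\ref{fig:doubletriple} gives a single commutation relation $[a,b]=1$ between the two meridians of the branches; for a triple point it gives the relation $[c,b,a]=1$, i.e.\ $c\,b\,a=b\,a\,c=a\,c\,b$. When a singular point lies to the left of the base fiber, transporting the basis to that fiber conjugates the meridians of the branches that pass over the singular value by the loop used to get there, which accounts for the conjugated generators appearing in Figures~\ref{fig:left} and~\ref{fig:right} (such as $\ell_2 a_1\ell_2^{-1}$, $a_3\ell_4 a_3^{-1}$, $\ell_3 a_2\ell_3^{-1}$, $a_2\ell_4 a_2^{-1}$). Reading off the relation at each point, the left-hand points contribute
\[
[\ell_2,a_1,\ell_1]=1,\quad [a_3,\ell_4,\ell_1]=1,\quad [\ell_3,a_2,\ell_1]=1,\quad [a_3,\ell_2\cdot a_1\cdot\ell_2^{-1}]=1,
\]
that is \eqref{b12}, \eqref{b14}, \eqref{b13}, \eqref{a2}, while the right-hand points contribute
\[
[a_2,a_3]=1,\quad [a_2,\ell_4,\ell_2]=1,\quad [a_1,a_2]=1,\quad [\ell_3,a_3,\ell_2]=1,\quad [\ell_3,a_2\cdot\ell_4\cdot a_2^{-1},a_1]=1,
\]
that is \eqref{a1}, \eqref{b24}, \eqref{a3}, \eqref{b23}, \eqref{b34}. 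Since there are no other multiple points and no relations arise over the regular fibers, the Zariski--van Kampen theorem yields that $G$ is generated by $\mathcal{A}$ subject precisely to \eqref{infinito}, \eqref{b12}, \eqref{b13}, \eqref{b14}, \eqref{b23}, \eqref{b24}, \eqref{b34}, \eqref{a1}, \eqref{a2}, and \eqref{a3}.
\end{proof}
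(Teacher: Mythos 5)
Your proof follows exactly the paper's route: the proposition is the summary of the Zariski--van Kampen computation, with \eqref{infinito} from the generic fiber (since the line at infinity is omitted) and one relation per multiple point read off from the local models of Figure~\ref{fig:doubletriple}, conjugating appropriately for the points left of the base fiber. The only blemish is the phrase saying the $P_{ij}$ ``lie on one $L$-line and one $A$-line''---each $P_{ij}=L_i\cap L_j$ lies on \emph{two} $L$-lines and one $A$-line, which is why it is a triple point---but this does not affect the relations you list, which agree with the paper's.
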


This is a non-abelian group, as it is the case for any line arrangement having
\emph{more-than-double} points. In fact, this presentation has redundant
generators and relations but it will be useful to express the meridians of the
exceptional components of the blow-ups. If we blow up a point $P_{ij}$, we denote by $E_{ij}$
its exceptional component and by $e_{ij}$ a suitable meridian.
In the same way, if we blow up a point $R_{i}$, we denote by $E_{i}$
its exceptional component and by $e_{i}$ a suitable meridian.

\begin{lemma}
The meridians $e_{ij}, e_{i}$ have the following expressions in terms
of the generators of~$G$:
\begin{align*}
e_{12}&:=\ell_2\cdot a_1\cdot\ell_1&
e_{13}&:=\ell_3\cdot a_2\cdot\ell_1\\
e_{14}&:=\ell_1\cdot a_3\cdot\ell_4&
e_{23}&:=\ell_3\cdot a_3\cdot\ell_2\\
e_{24}&:=\ell_2\cdot a_2\cdot\ell_4&
e_{34}&:=a_2\cdot\ell_4\cdot a_2^{-1}\cdot a_1\cdot\ell_3\\
e_1&:=a_2\cdot a_3&
e_2&:=a_3\cdot\ell_2\cdot a_1\cdot\ell_2^{-1}\\
e_3&:=a_1\cdot a_2.
\end{align*}
\end{lemma}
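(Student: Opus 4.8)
The plan is to read off each meridian $e_{ij}$ and $e_i$ directly from the local pictures in Figures~\ref{fig:doubletriple}, \ref{fig:left}, and \ref{fig:right}, where the commuting element $e$ attached to each multiple point was already identified when the Zariski--van Kampen relations were derived. Recall the general principle recorded in Figure~\ref{fig:doubletriple}: at a double point with transverse branches carrying meridians $a,b$, the element $e=b\cdot a$ is a meridian of the exceptional curve of the blow-up (it is the meridian of the union of the two branches, which becomes the proper transform of the total transform once separated); at a triple point with branches $a,b,c$ read in the appropriate cyclic order, $e=c\cdot b\cdot a$ plays the same role and commutes with $a$, $b$, $c$. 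So for each of the ten multiple points of $\mathcal L$ I would simply transcribe the corresponding $e$, being careful about two bookkeeping issues: (i) which meridians (the ``left'' or the ``right'' standard generators) are the genuine elements of $\mathcal A$, and (ii) the cyclic order in which the branches meet the vertical line, since running counterclockwise around the bad fibre swaps the roles of left and right as noted before Figure~\ref{fig:right}.

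Concretely: for $P_{12}$ the branches in $\mathcal A$-coordinates are $\ell_1$, $a_1$, $\ell_2$ (Figure~\ref{fig:left}, with $a_1$ the middle branch written as $\ell_2 a_1\ell_2^{-1}$ on the left), and the triple-point rule gives $e_{12}=\ell_2\cdot a_1\cdot\ell_1$; similarly $P_{13}$ yields $e_{13}=\ell_3\cdot a_2\cdot\ell_1$ and $P_{14}$ yields $e_{14}=\ell_1\cdot a_3\cdot\ell_4$ (the cyclic order here placing $\ell_1$ first). For the points to the right, $P_{23}$ gives $e_{23}=\ell_3\cdot a_3\cdot\ell_2$, $P_{24}$ gives $e_{24}=\ell_2\cdot a_2\cdot\ell_4$, and $P_{34}$ — whose middle branch appears as $a_2\ell_4 a_2^{-1}$ — gives $e_{34}=a_2\cdot\ell_4\cdot a_2^{-1}\cdot a_1\cdot\ell_3$. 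The three double points $R_1$, $R_2$, $R_3$ are handled the same way: $R_1=A_2\cap A_3$ with meridians $a_2,a_3$ gives $e_1=a_2\cdot a_3$; $R_3=A_1\cap A_2$ gives $e_3=a_1\cdot a_2$; and $R_2=A_1\cap A_3$, whose $A_1$-branch to the left of the dotted line is $\ell_2 a_1\ell_2^{-1}$ while its $A_3$-branch is $a_3$ (Figure~\ref{fig:left}, rightmost panel), gives $e_2=a_3\cdot\ell_2\cdot a_1\cdot\ell_2^{-1}$. In each case the relations $\rho_{ij}$, $\rho_i$ listed in the proposition certify that $e_{ij}$, resp.\ $e_i$, commutes with each of the local branch meridians, which is exactly the statement that it is (conjugate to) a meridian of the exceptional divisor.

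The only genuine subtlety — and the step I expect to require the most care — is pinning down the \emph{cyclic order} of the branches at each point, equivalently the precise word (not merely its conjugacy class) representing $e_{ij}$, since later sections need these meridians as honest elements of $G$ in order to track them through successive blow-ups. This is where the ``real picture'' of Figure~\ref{fig:arrangement} is essential: the slopes of the lines near each intersection, together with the convention of approaching the singular fibre counterclockwise from $x_0\gg 0$, fix the order in which the branches cross a nearby vertical line and hence fix the word. I would justify each expression by pointing to the local model already drawn (Figures~\ref{fig:left} and~\ref{fig:right}) rather than recomputing braid monodromy from scratch; the consistency check is that multiplying out the $e$'s around $\mathcal L$ and using relation~\eqref{infinito} must be compatible, and that each $e$ visibly commutes with the branch meridians by the relation bearing the matching label. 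Given that all the hard braid-picture analysis was already carried out to obtain the presentation in the proposition, this lemma is essentially a transcription, and I would present it as such.
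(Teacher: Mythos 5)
Your proposal is correct and follows exactly the route the paper takes (indeed the paper offers no separate proof, the lemma being a direct transcription of the local models in Figures~\ref{fig:doubletriple}, \ref{fig:left} and~\ref{fig:right}): each $e_{ij}$, $e_i$ is the product $c\cdot b\cdot a$ (resp.\ $b\cdot a$) of the branch meridians at the corresponding multiple point, well defined up to the cyclic permutations and commutations already guaranteed by the relations $\rho_{ij}$, $\rho_i$. Your identified subtleties (which side of the dotted line carries the honest generators, and the cyclic order fixed by the real picture) are precisely the right ones, and your individual expressions all agree with the lemma; the only slip is the harmless miscount of ``ten'' multiple points where there are nine.
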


\subsection{Fundamental group of the complement of the Milnor fiber of a smoothing of \texorpdfstring{$\mathcal{B}^3_2(p)$}{B23(p)}}\label{sec:b23}
\mbox{}

In this subsection we perform the direct computation of the fundamental group of the Milnor fiber of a smoothing of $\mathcal{B}^3_2(p)$;
the result coincides with the group described in \S\ref{sec:group}.

Let $\pi:X^2_3\to\mathbb{P}^2$ the composition of the following blowing-ups. First we 
blow-up all the points $P_{ij}$. 
In this intermediate surface, we blow up
$Q_1$ (resp. $Q_2$), the intersection point of $E_{23}$ (resp. $E_{14}$) and the strict transform of $A_3$.
Finally we perform $p+1$ extra blowing-ups
over $R_3$, all of them on the intersection point of the strict transform of $A_1$
and the previous exceptional component.
It is obvious that $X^2_3\setminus\pi^{-1}(\mathcal{L})$ is isomorphic to $\mathbb{P}^2\setminus\mathcal{L}$.
The curve $\pi^{-1}(\mathcal{L})$ has $16+p$ connected components (see Figure~\ref{fig:graphB}).

Let $\mathcal{B}\subset\pi^{-1}(\mathcal{L})$ the curve obtained as union of the strict transforms
of $L_j$, $A_i$ and $E_{14},E_{23},E_{12}$ and the strict transforms of the first  $p$
exceptional components over $R_3$, i.e, $10+p$ connected components.
It is a normal crossing divisor whose dual graph is shown in Figure~\ref{fig:graphB}.

\begin{figure}[ht]
\centering
\begin{tikzpicture}
\foreach \a in {0,...,6}
{
\coordinate (A\a) at (\a,0);
\fill (A\a) circle [radius=0.1];
}
\foreach \b in {1,...,3}
{
\coordinate (B\b) at (3,\b);
\fill (B\b) circle [radius=0.1];
}
\coordinate (Q1) at (6,1);
\node[above] at (Q1) {$Q_1$};
\fill[gray] (Q1) circle [radius=0.1cm];
\draw[dotted] (B2)--(Q1)--(A5);
\coordinate (Q2) at (0,1);
\node[above] at (Q2) {$Q_2$};
\fill[gray] (Q2) circle [radius=0.1cm];
\draw[dotted] (B2)--(Q2)--(A1);
\draw (A0)--(A6);
\draw (A3)--(B3);
\node[below=3pt] at (A0) {$L_4$};
\node[below=3pt] at (A1) {$E_{14}$};
\node[below=3pt] at (A2) {$L_{1}$};
\node[below=3pt] at (A3) {$E_{12}$};
\node[above left=0pt] at (A3) {$-1$};
\node[below=3pt] at (A4) {$L_2$};
\node[below=3pt] at (A5) {$E_{23}$};
\node[below=3pt] at (A6) {$L_{3}$};
\node[above right=1pt] at (B1) {$A_{1}$};
\node[left=3pt] at (B1) {$-(p+2)$};
\node[right=3pt] at (B2) {$A_{3}$};
\node[above right=3pt] at (B3) {$A_{2}$};
\node[left=3pt] at (B2) {$-3$};
\node[below=3pt] at (2,3) {$E_{3,1}$};
\node[below=3pt] at (0,3) {$E_{3,p}$};
\fill (2,3) circle [radius=0.1];
\fill (0,3) circle [radius=0.1];
\draw (2,3)--(B3);
\draw[dashed] (2,3)--(0,3);
\coordinate (D) at ($.4*(B1)+.6*(0,3)$);
\draw[dotted] (B1)--(0,3);
\fill[gray] (D) circle [radius=0.1cm];
\node[below] at (D) {$E_{3,p+1}$};

\coordinate (E13) at (4,-1.5);
\fill[gray] (E13) circle [radius=0.1];
\node[below right] at (E13) {$E_{13}$};
\draw[dotted] (A2)--(E13);
\draw[dotted] (A6)--(E13);
\draw[dotted] (B3) to[out=0, in=90] (7,1) to[out=-90, in=0] (E13) ;

\coordinate (E24) at (2,-1.5);
\fill [gray] (E24) circle [radius=0.1];
\node[below right] at (E24) {$E_{24}$};
\draw[dotted] (A4)--(E24);
\draw[dotted] (A0)--(E24);
\draw[dotted] (B3) to[out=90, in=90] (-2,1) to[out=-90, in=180] (E24) ;

\coordinate (E34) at (4.5,1);
\fill[gray] (E34) circle [radius=0.1];
\node[below] at (E34) {$E_{34}$};
\draw[dotted] (B1)--(E34);
\draw[dotted] (A0) --(E34);
\draw[dotted] (A6) -- (E34) ;

\end{tikzpicture}
\caption{Missing self-intersections are $-2$ if black and $-1$ if gray.}
\label{fig:graphB}
\end{figure}
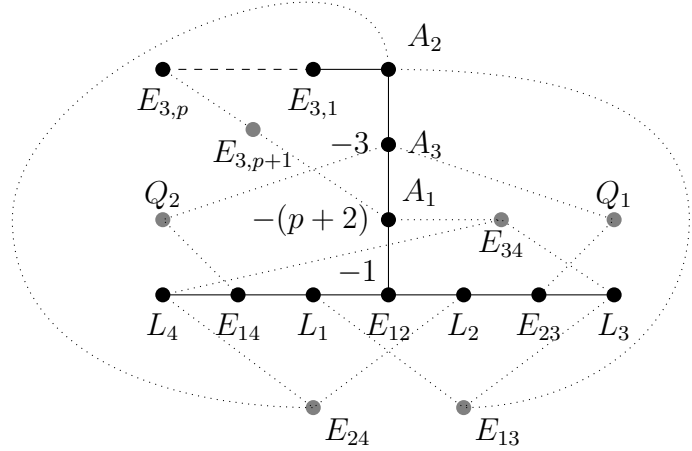

The missing curves are $E_{13},E_{24},E_{34}$,
the last exceptional component over
$R_3$, and the exceptional divisors coming from $Q_1,Q_2$;
the meridians of those ones are
\begin{align*}
e_{3,j}&:=a_1^{j}\cdot a_2\\
q_1&:=e_{23}\cdot a_3=\ell_2\cdot\ell_3\cdot a_3^2\\
q_2&:=e_{14}\cdot a_3=\ell_4\cdot\ell_1\cdot a_3^2.
\end{align*}
In order to compute $G_1:=\pi_1(X_3^2\setminus\mathcal{B})$
we need the following classical result.

\begin{proposition}[{\cite[Lemma~4.18]{fujita:82}}]\label{prop:fujita}
Let $X$ be a smooth complex projective surface, $D\subset X$ a reduced divisor, and $D'=D\cup A_1 \cup\cdots \cup A_r$, where 
$A_1,\dots,A_r$ are the irreducible components of $D'$ which are \emph{not} in $D$.

Let $i_*:\pi_1(X\setminus D')\to\pi_1(X\setminus D)$. Then $i_*$ is surjective and $\ker i_*$
is generated by all the meridians of the components $A_i$ in $X$.
\end{proposition}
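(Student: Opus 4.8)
The plan is to reduce the statement to the case $r=1$ by induction and then to settle that base case by a general-position argument in the open $4$-manifold $U:=X\setminus D$.

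First I would set up the induction. Writing $D_j:=D\cup A_1\cup\cdots\cup A_j$, each $D_j$ is again a reduced divisor containing $D$, and $i_*$ is the composite $\pi_1(X\setminus D_r)\to\pi_1(X\setminus D_{r-1})\to\cdots\to\pi_1(X\setminus D_0)$. A meridian of $A_j$ computed in $\pi_1(X\setminus D_j)$ is the image of a meridian of $A_j$ in $\pi_1(X\setminus D_r)$ (a small loop around a generic smooth point of $D_r$ lying on $A_j$), and a routine argument about normal closures in iterated quotients shows the following: if each single arrow is surjective with kernel the normal closure of the meridians of the one newly deleted curve, then $i_*$ is surjective with kernel the normal closure of the meridians of all the $A_i$. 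Hence I may assume $D'=D\cup A$ with $A$ irreducible and $A\not\subset D$.

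Next I would analyse the inclusion $U\setminus A^{\circ}\hookrightarrow U$, where $A^{\circ}:=A\setminus D=A\cap U$ is closed in $U$ (since $A$ is closed in $X$) and is a complex curve, smooth away from a finite set $\Sigma$, with $\dim_{\mathbb R}A^{\circ}=2$, $\dim_{\mathbb R}\Sigma=0$, $\dim_{\mathbb R}U=4$; note $X\setminus D'=U\setminus A^{\circ}$. For surjectivity of $i_*$: a loop in $U$ based in $U\setminus A^{\circ}$ can, after a homotopy rel basepoint, be made smooth and transverse to a Whitney stratification of $A^{\circ}$, and the dimension counts $1+2<4$ and $1+0<4$ then force it to be disjoint from $A^{\circ}$, so it comes from $\pi_1(U\setminus A^{\circ})$. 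For the kernel: given a loop $\gamma$ in $U\setminus A^{\circ}$ bounding a disk $f\colon\mathbb D^2\to U$, I would perturb $f$ rel boundary to be transverse to the stratification, so that $f$ avoids $\Sigma$ ($2+0<4$) and meets the smooth part of $A^{\circ}$ transversally in finitely many interior points; since near each such point $(U,A^{\circ})\cong(\C^2,\{z_1=0\})$, whose complement has $\pi_1\cong\Z$ generated by the meridian, cutting $\mathbb D^2$ along disjoint arcs from $\partial\mathbb D^2$ to small loops around these points writes $\gamma$ in $\pi_1(U\setminus A^{\circ})$ as a product of conjugates of meridians of $A$. As $A$ is irreducible, the smooth locus of $A^{\circ}$ is connected, so the meridians of $A$ form a single conjugacy class; together with the obvious fact that a meridian bounds an embedded transverse disk in $U$ and hence dies in $\pi_1(U)$, this identifies $\ker i_*$ with the normal closure of the meridians of $A$, finishing the base case and the proposition.

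The step I expect to be the main obstacle is making the transversality arguments rigorous: $A^{\circ}$ is only a closed analytic subset of $U$, not a smooth submanifold, so one must work with a Whitney stratification and stratified transversality, controlling the intersection of loops and disks with each stratum by its dimension. This is precisely what is packaged in the cited reference and may simply be quoted. An equivalent alternative, which I would mention, is a van Kampen computation for the decomposition of $U$ into a regular neighborhood $T$ of $A^{\circ}$ and the complement $U\setminus A^{\circ}$: since $T$ deformation retracts onto $A^{\circ}$ and, over the smooth part of $A^{\circ}$, the punctured neighborhood contributes exactly the meridian as the extra generator that is killed, one recovers the same description of $\ker i_*$.
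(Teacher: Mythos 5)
Your proposal is correct, and its first half—reducing to $r=1$ by peeling off one component at a time and chasing normal closures through the tower of surjections—is exactly the induction the paper invokes. Where you genuinely diverge is the base case. The paper proves nothing new there: it notes that Fujita's original Lemma~4.18 already handles $r=1$ under the extra hypothesis that $A_1$ is transversal to $D$, and it removes that hypothesis by first taking an embedded resolution of the divisor so that every component to be deleted becomes smooth and transversal to the union of the others, then quoting Fujita at each stage; the hidden cost of that route is the bookkeeping with exceptional divisors (one must track which exceptional curves lie over $D$, and hence are already removed, versus which ones contribute meridians that die for independent reasons). You instead prove the $r=1$ case directly by stratified general position inside the open fourfold $U=X\setminus D$: based loops can be pushed off the real-codimension-two set $A\cap U$, null-homotopy disks can be perturbed rel boundary to miss the finite singular set and to meet the smooth locus transversally in finitely many points, and irreducibility of $A$ (hence connectedness of the smooth locus of $A\cap U$) puts all the resulting meridians in one conjugacy class. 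That argument is standard and sound—it is essentially how Fujita's lemma is itself proved—and it buys you a self-contained proof that needs neither the transversality hypothesis nor any resolution; what the paper's route buys is brevity and the ability to lean entirely on the cited reference rather than on Thom--Mather transversality for stratified sets. Your closing remark about the van Kampen decomposition into a regular neighborhood of $A\cap U$ and its complement is a legitimate equivalent packaging of the same computation.
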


This statement is sharper than Fujita's original one, where $r=1$ and $A_1$ must
be transversal to~$D$; the combination of induction and embedded resolution of 
$D$ reduces this statement to the original one.

Therefore, $G_1$ is isomorphic to the quotient 
of $G$ by the normal subgroup generated by $e_{13}$, $e_{24}$, $e_{34}$, $e_{3,p+1}$,
$q_1$, and $q_2$.
A presentation of $G_1$ is obtained from the presentation of $G$ by adding the 
relations coming from killing the above meridians; we can forget the relations 
\eqref{b12}, \eqref{b24}, \eqref{b34} and \eqref{a3}.
Summarizing, $G_1$ is generated by $\mathcal{A}$, i.e., $\ell_1,\dots,\ell_4,a_1,a_2,a_3$, with
the following relations:
\begin{align}\notag
\eqref{a1}&&
[a_2,a_3]&=1\\
\label{e24}\tag{$\sigma_{24}$}&&
a_2\cdot\ell_4\cdot\ell_2&=1\\
\label{e13}\tag{$\sigma_{13}$}&&
\ell_3\cdot a_2\cdot \ell_1&=1\\
\label{e34}\tag{$\sigma_{34}$}&&
\ell_3\cdot a_2\cdot\ell_4\cdot a_2^{-1}\cdot a_1&=1\\
\label{e3}\tag{$\sigma_3$}&&
a_1^{p+1}\cdot a_2&=1\\
\notag
\eqref{infinito}&&
\ell_3\cdot a_2\cdot a_3\cdot\ell_4\cdot\ell_2\cdot a_1\cdot\ell_1&=1\Longleftrightarrow
a_3\cdot a_1=a_2\\
\label{q1}\tag{$\tau_1$}&&
\ell_2\cdot\ell_3\cdot a_3^2&=1\\
\label{q2}\tag{$\tau_2$}&&
\ell_4\cdot\ell_1\cdot a_3^2&=1\\
\notag\eqref{b12}&&
[\ell_2,a_1,\ell_1]&=1\\
\notag\eqref{b14}&&
[a_3,\ell_4,\ell_1]&=1\Leftrightarrow
\ell_1\cdot a_3\cdot\ell_4\cdot a_3=1
\Leftrightarrow[\ell_1,a_3]\!=\![\ell_4,a_3]=1\\
\notag\eqref{a2}&&
[a_3,\ell_2\cdot a_1\cdot\ell_2^{-1}]&=1\\
\notag\eqref{b23}&&
[\ell_3,a_3,\ell_2]&=1
\Leftrightarrow[\ell_2,a_3]\!=\![\ell_3,a_3]=1
\end{align}
This implies that $a_3$ is central, which replaces \eqref{b23}, \eqref{a2}, \eqref{b14}, and \eqref{a1}.
Some generators can be eliminated:
\begin{align*}
a_2&=a_1^{-(p+1)}&
a_3&=a_1^{-(p+2)}&
\ell_2&=\ell_1\cdot a_1^{-(p+3)}\\
\ell_3&=\ell_1^{-1}\cdot a_1^{p+1}&
\ell_4&=a_1^{2(p+2)}\cdot\ell_1^{-1}
\end{align*}
Hence the group is generated by $a_1,\ell_1$ with the following relations
\begin{align*}
[a_1^{p+2},\ell_1]&=1&
[a_1,\ell_1^2]&=1\\
\ell_1\cdot a_1\cdot\ell_1^{-1}\cdot a_1^{2p+5}&=1&
\ell_1\cdot a_1\cdot\ell_1 &=a_1^{p+1}
\end{align*}
As a consequence $a_1^{2(p+2)(p+3)}=1$, and then 
$\ell_1^2\cdot a_1^{p+6}=1$.
Hence, calling $a:=a_1$, $\ell:=\ell_1$, and $q:=p+3$ we have:
\begin{equation}\label{eq:presentation}
G_1=\langle a,\ell\mid
a^{2(q-1)q}=1,\ell^2= a^{3(q-1)},\ell\cdot a\cdot\ell^{-1}= a^{1-2 q}
\rangle.
\end{equation}
Note that $\ell^2,a^{(q-1)}$ are central. 
The group fits
in a short exact sequence
\[
1\to C_{2q(q-1)}\to G_1\to C_2\to 1
\]
where $C_{j}$ is a cyclic group of order $j$. Any element of 
$G_1$ admits a unique representation of the form
$\ell^{\varepsilon}\cdot a^{m}$ where 
$\varepsilon\in\{0,1\}$ and $m\in\{0,1,\dots,2(q-1)q-1\}$:
\begin{align*}
a_2&=a_1^{2-q}&
a_3&=a_1^{1-q}&
\ell_2&=\ell_1\cdot a_1^{-q}\\
\ell_3&=\ell_1\cdot a_1^{1-2q}&
\ell_4&=\ell_1\cdot a_1^{1-q}&
e_{12}&=a_1^{2(q-1)}\\
e_{14}&=a^{q-1}&
e_{23}&=a^{q-1}
\end{align*}

Let us compare this presentation with the one given in \S\ref{sec:group}. In the notation $m=p+2=q-1$.
The element $S$ in \S\ref{sec:group} corresponds with  the element~$a$ in~\eqref{eq:presentation} 
while $T$ corresponds with $\ell a$.

\subsection{Fundamental group of the complement of the Milnor fiber of a smoothing of \texorpdfstring{$\mathcal{C}^2_3(p)$}{C23(p)}}
\label{sec:c23}
\mbox{}

We are going to study a projective curve $\mathcal{C}_2\cup\mathcal{C}_3\cup\mathcal{T}_\infty$, where $\mathcal{C}_3$ is a nodal cubic with
node at $P\in\mathbb{P}^2$, $\mathcal{C}_2$ is a smooth conic, and $\mathcal{T}_\infty$  is a line satisfying:
\begin{itemize}
\item  $\mathcal{C}_2\cap\mathcal{C}_3=\{Q\}$, where $P\neq Q$ (from Bézout's theorem $(\mathcal{C}_2\cdot\mathcal{C}_3)_{Q}=6$).
\item $\mathcal{T}_\infty$ is one of the tangent lines to $\mathcal{C}_3$ at $P$.
\end{itemize}
It is not hard to see that there is only such a curve
up to projective transformation.
Equations can be given:
\begin{align*}
\mathcal{C}_3&:y^2 z=x^2(x+z)\\
\mathcal{C}_2&:y^2=-(x+z)(2x+z)\\
\mathcal{T}_\infty&:y=x.
\end{align*}
The other tangent line to $\mathcal{C}_3$ at $P$ is denoted by $\mathcal{T}_0$ and its equation is $y+x=0$.

\begin{remark}
Fowler showed that $\mathcal{C}^2_3(p)$ had two distinct smoothing components (related by complex conjugation) which seems to be
in contradiction with the projective rigidity of $\mathcal{C}_2\cup\mathcal{C}_3\cup\mathcal{T}_\infty$. In fact,
there is no such contradiction; note that as shown in Figure~\ref{fig:c23} for the construction of the Milnor fiber
we need to perform some blow-ups at one of the two points of $\mathcal{C}_2\cap\mathcal{T}_\infty$ 
(which are complex conjugate with the above equations!), and this fact confirms the existence of two distinct smoothing components.
\end{remark}

Unfortunately, the real picture in Figure~\ref{fig:c3c2tinfty} does not contain all the topological information of the curve,
mainly due the fact that $\mathcal{C}_2$ and $\mathcal{T}_\infty$ do not intersect at real points.

\begin{figure}[ht]
\begin{tikzpicture}[yscale=.75]
\draw[line width=1.2] (1,3) to[out=-100, in=60] (0,0) to[out=-120,in=0] (-2,-2)  to[out=180, in=-90] (-4,0)
to[out=90,in=180] (-2,2) to[out=0,in=120] (0,0) to[out=-60,in=100] (1,-3) node[right] {$\mathcal{C}_3$};
\draw[line width=1.2,color=red] (-2,0) to[out=90,in=90] (-4,0) to[out=-90,in=-90] (-2,0)node[above right] {$\mathcal{C}_2$};

\draw[line width=1.2,blue] (-120:4) -- (60:4);

\node at (4,1) {$\mathcal{C}_3:y^2 z=x^2(x+z)$};
\node[red] at (4,-1) {$\mathcal{C}_2:y^2=-(x+z)(2x+z)$};
\node[blue] at (4,2.5) {$\mathcal{T}_\infty:y=x$};

\end{tikzpicture}
\caption{Real picture of $\mathcal{C}_2\cup\mathcal{C}_3\cup\mathcal{T}_\infty$.}
\label{fig:c3c2tinfty}
\end{figure}
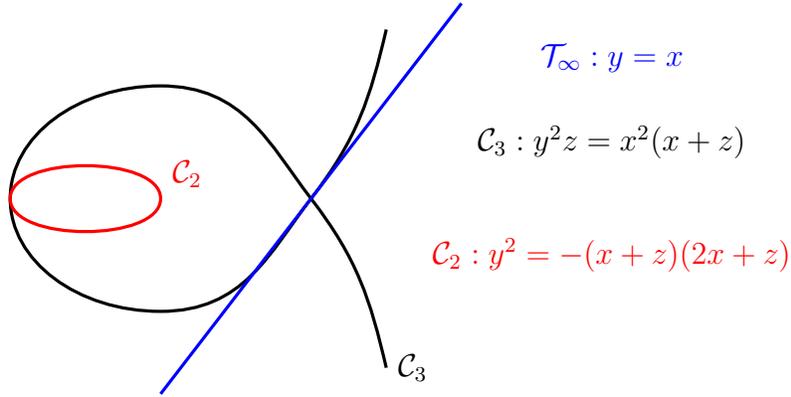

\begin{theorem}\label{thm:c3c2tinfty}
The fundamental group of the complement of 
$\mathcal{C}_2\cup\mathcal{C}_3\cup\mathcal{T}_\infty$ in $\mathbb{P}^2$
is isomorphic to $\mathbb{Z}\oplus\mathbb{Z}$.
\end{theorem}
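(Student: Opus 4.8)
The plan is to compute $\pi_1(\mathbb{P}^2\setminus\mathcal{D})$, where $\mathcal{D}:=\mathcal{C}_2\cup\mathcal{C}_3\cup\mathcal{T}_\infty$, by the Zariski--van Kampen method applied to the pencil of lines through the node $P$ of $\mathcal{C}_3$. First I would record the local structure of $\mathcal{D}$ from the explicit equations: $\mathcal{C}_3\cap\mathcal{T}_\infty=\{P\}$, $\mathcal{C}_2\cap\mathcal{C}_3=\{Q\}$ with $(\mathcal{C}_2\cdot\mathcal{C}_3)_Q=6$ (an $A_{11}$ point), $\mathcal{C}_2\cap\mathcal{T}_\infty$ a pair of complex conjugate transverse points, and at $P$ three smooth branches---the two branches of $\mathcal{C}_3$ and $\mathcal{T}_\infty$, the last one tangent to one of the former. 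In particular the abelianization is $H_1(\mathbb{P}^2\setminus\mathcal{D})=\mathbb{Z}\oplus\mathbb{Z}$ (the only relation among the three meridians being $2[\mathcal{C}_2]+3[\mathcal{C}_3]+[\mathcal{T}_\infty]=0$), so everything reduces to showing $\pi_1$ is abelian.

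Next I would blow up $P$ to get $\sigma\colon X=\mathbb{F}_1\to\mathbb{P}^2$ with exceptional curve $E$ and ruling $\rho\colon X\to\mathbb{P}^1$ by strict transforms of lines through $P$. Since $P\in\mathcal{D}$ one has $\mathbb{P}^2\setminus\mathcal{D}\cong X\setminus(E\cup\widetilde{\mathcal{C}}_2\cup\widetilde{\mathcal{C}}_3\cup F_\infty)$, where $F_\infty$ is the fibre over $\infty:=[\mathcal{T}_\infty]$ (the strict transform of $\mathcal{T}_\infty$), $\widetilde{\mathcal{C}}_3$ is now a smooth section of $\rho$, and $\widetilde{\mathcal{C}}_2=\mathcal{C}_2$ is a bisection. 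Restricting $\rho$ to this open surface gives a fibration over $\mathbb{P}^1\setminus\{\infty\}\cong\C$ whose generic fibre is $\mathbb{P}^1$ minus four points---the point of $E$, the point of $\widetilde{\mathcal{C}}_3$, and the two points of $\mathcal{C}_2$---so the generic fibre group is $\langle m_E,m_3,m_2,m_2'\mid m_E m_3 m_2 m_2'=1\rangle$, free of rank three, and no relation at infinity is created. A quick enumeration shows the atypical fibres lie over exactly four points: $[\mathcal{T}_0]$, over which the fibre meets a node of $E\cup\widetilde{\mathcal{C}}_3$; the two lines $t_1,t_2$ from $P$ tangent to $\mathcal{C}_2$; and $[\overline{PQ}]$, over which the fibre passes through the $A_{11}$ point $Q$ (the line $\overline{PQ}$ being transverse to the common tangent of $\mathcal{C}_2$ and $\mathcal{C}_3$ at $Q$).

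The core step is then to compute the braid monodromy around these four points and read off the relations, keeping careful track---exactly as in \S\ref{sec:arrangement}--\S\ref{sec:b23}---of the conjugating paths between fibres, since these conjugations are what make the argument work. The node fibre over $[\mathcal{T}_0]$ contributes a commutation relation between suitable conjugates of $m_E$ and $m_3$; each simple tangency $t_1,t_2$ identifies the two meridians of $\mathcal{C}_2$ up to conjugation; and $[\overline{PQ}]$, an $A_{11}$ in general position (braid $\sigma^{12}$ on the two relevant strands), contributes the relations that $(m_3m_2)^6$ commutes with $m_3$ and with $m_2$. Eliminating $m_E$ and $m_2'$ through the fibre relation and simplifying the resulting two-generator presentation---this is where the precise conjugating words are used---one concludes that $m_2$ and $m_3$ commute, hence $\pi_1(\mathbb{P}^2\setminus\mathcal{D})$ is abelian and therefore equals $H_1=\mathbb{Z}\oplus\mathbb{Z}$.

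The main obstacle is the analysis at $Q$. In isolation the $A_{11}$ point yields only the weak relation ``$(m_3m_2)^6$ is central'', and a naive simplification produces a visibly non-abelian quotient (for instance one surjecting onto $\Z/2\ast\Z/6$); commutativity emerges only from the interplay of this relation with the conjugated tangency relations at $t_1,t_2$ and the node relation at $[\mathcal{T}_0]$. So the delicate point is to determine the conjugating words precisely, i.e.\ to propagate explicit meridians through the monodromy, which is the ``precise calculations for the fundamental group of the complement of a plane curve'' alluded to in the introduction. One could instead run Zariski--van Kampen with a generic pencil of lines---ten atypical fibres, of which the six tangency fibres of $\mathcal{C}_2$ and $\mathcal{C}_3$ give relations automatically compatible with irreducibility---and again reduce to the same $A_{11}$ analysis at $Q$ together with the triple point at $P$.
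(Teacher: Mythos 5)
Your strategy is the same as the paper's: a Zariski--van Kampen computation with respect to the pencil of lines through the node $P$ of $\mathcal{C}_3$. Your enumeration of the atypical members --- $[\mathcal{T}_0]$, the two tangents from $P$ to $\mathcal{C}_2$, and $[\overline{PQ}]$ through the $A_{11}$ point --- matches exactly the four special fibres $y_\omega=0,-\tfrac13,-3,-1$ in the paper, and your description of the local relation types is correct, as is the reduction to ``show $\pi_1$ is abelian'' via the $H_1$ computation.

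The problem is that the proposal stops precisely where the proof begins. The entire content of the theorem is the verification that the braid-monodromy relations, \emph{with the correct conjugating words}, force $m_2$ and $m_3$ to commute; you assert this (``one concludes that $m_2$ and $m_3$ commute'') without exhibiting a single braid or carrying out the elimination, and you yourself concede that the naively stated relations leave a visibly non-abelian quotient. So nothing is actually established. What the paper supplies, and your proposal omits, is (i) a device that makes the braids computable by hand: adjoin the second nodal tangent $\mathcal{T}_0$, perform elementary transformations to a surface $\Sigma_3$ (blown down to $\mathbb{P}^2_{(1,1,3)}$ with an explicit birational map $\psi$) on which the curve has real equations whose real picture determines the whole braid monodromy; (ii) the resulting explicit presentation $\langle q,c,f\mid [q,(qc)^3]=1,\ [f,qcq^{-1}]=1,\ fqc^{-1}qc=1\rangle$ after killing the meridian of $\mathcal{T}_0$; and (iii) the short algebraic derivation $f=(qc^{-1}qc)^{-1}\Rightarrow[q,cqc]=1\Rightarrow qc=cq$. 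Note in particular that in the paper's computation the $A_{11}$ relation $[q,(qc)^6]=1$ ends up redundant (it follows from the tangency relation $[q,(qc)^3]=1$, whose exponent $3$ is exactly one of the conjugating words you leave undetermined), and the second decisive relation $[q,cqc]=1$ comes from the fibre over $[\mathcal{T}_0]$ --- a contribution your sketch records only as ``a commutation relation between conjugates of $m_E$ and $m_3$.'' Until you either produce these braids explicitly (or certify them by computer, as the paper also indicates via \texttt{SIROCCO}), the argument is a plan, not a proof.
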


Before giving the proof of this theorem, let us  show how the plane curve curve 
$\mathcal{C}_2\cup\mathcal{C}_3\cup\mathcal{T}_\infty$ is related to the Milnor fibre
of the $\qhd$-smoothing of $\mathcal{C}^2_3(p)$.
This curve follows the ideas in~\cite{w7} to find the curve at infinity for~$\mathcal{C}_2^3(p)$, using conics. In~\cite{jf}, the author proceeds using a line arrangement with 9 lines: the McLane arrangement and one line joining two triple points.

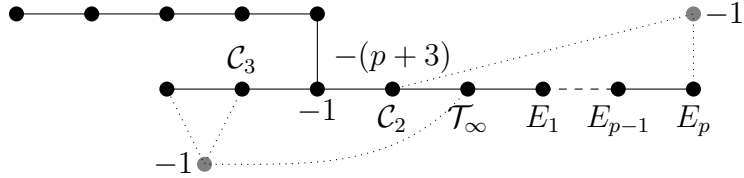
\begin{figure}[ht]
\begin{center}
\begin{tikzpicture}
\foreach \x in {-2,...,5}
{
\coordinate (A\x) at (\x,0);
}
\foreach \x in {-2,...,5}
{
\fill (A\x) circle [radius=.1];
}
\foreach \x in {-4,...,0}
{
\coordinate (B\x) at (\x,1);
\fill (B\x) circle [radius=.1];
}
\draw (A0)--(B0)--(B-4);
\coordinate (C) at (-1.5,-1);
\fill[color=gray] (C) circle [radius=.1];
\draw[dotted] (A-1)--(C)--(A-2);
\draw(A-2)--(A3);
\draw[dashed] (A3)--(A4);
\draw (A4)--(A5);
\coordinate (D) at (5,1);
\fill[color=gray] (D) circle [radius=.1];
\draw[dotted] (A2) to[out=-130,in=0] (C);
\draw[dotted] (A5) -- (D)--(A1);

\node[below] at (A0) {$-1$};
\node[right] at (D) {$-1$};
\node[above=2pt] at (A-1) {$\mathcal{C}_3$};
\node[below=2pt] at (A1) {$\mathcal{C}_2$};
\node[above=2pt] at (A1) {$-(p+3)$};
\node[below=2pt] at (A2) {$\mathcal{T}_\infty$};
\node[below=2pt] at (A3) {$E_1$};
\node[below=2pt] at (A4) {$E_{p-1}$};
\node[below=2pt] at (A5) {$E_{p}$};
\node[left] at (C) {$-1$};
\end{tikzpicture}
\caption{Resolution of $\mathcal{C}_2\cup\mathcal{C}_3\cup\mathcal{T}_\infty$  including
$\mathcal{C}_2^3(p)$.}
\label{fig:c23}
\end{center}
\end{figure}

\begin{corollary}\label{cor:c23}
The Milnor fibre of the $\qhd$-smoothing of $\mathcal{C}^2_3(p)$ is abelian.
\end{corollary}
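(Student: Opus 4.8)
The plan is to identify the Milnor fibre $M$ of the $\qhd$-smoothing of $\mathcal{C}^2_3(p)$ with a Zariski-open subset $Z\setminus D'$ of the rational surface $Z$ produced by the blow-up $\pi\colon Z\to\mathbb{P}^2$ of Figure~\ref{fig:c23}, where $D'$ is the subconfiguration of $\pi^{-1}(\mathcal{C}_2\cup\mathcal{C}_3\cup\mathcal{T}_\infty)$ whose dual graph is the resolution graph of $\mathcal{C}^2_3(p)$ displayed in Figure~\ref{fig:C23-local}. This is exactly the Pinkham description of the $\qhd$-smoothing following \cite{w7} and \cite{jf}, so it suffices to prove that $\pi_1(Z\setminus D')$ is abelian.

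To do this, first I would record that every center blown up by $\pi$ lies on $D:=\mathcal{C}_2\cup\mathcal{C}_3\cup\mathcal{T}_\infty$ (the node $P$ of $\mathcal{C}_3$, the contact point $Q$, the tangency at $P$, and the infinitely near points over one of the points of $\mathcal{C}_2\cap\mathcal{T}_\infty$). Hence $\pi$ restricts to an isomorphism $Z\setminus\pi^{-1}(D)\xrightarrow{\ \sim\ }\mathbb{P}^2\setminus D$, so that $\pi_1\bigl(Z\setminus\pi^{-1}(D)\bigr)\cong\pi_1(\mathbb{P}^2\setminus D)\cong\mathbb{Z}\oplus\mathbb{Z}$ by Theorem~\ref{thm:c3c2tinfty}. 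Since $D'\subset\pi^{-1}(D)$, I would then apply Proposition~\ref{prop:fujita} with the divisor $D'$ in the role of ``$D$'' and the total transform $\pi^{-1}(D)$ in the role of ``$D'$'': the inclusion $Z\setminus\pi^{-1}(D)\hookrightarrow Z\setminus D'$ induces a surjection $\pi_1\bigl(Z\setminus\pi^{-1}(D)\bigr)\twoheadrightarrow\pi_1(Z\setminus D')$. A quotient of $\mathbb{Z}\oplus\mathbb{Z}$ is abelian, so $\pi_1(M)$ is abelian, which is the assertion of the corollary. (If one wishes in addition to pin down the group, one combines abelianness with item~(3) of the introduction — $H_1(M)$ is a self-isotropic subgroup of the discriminant group — to recover the cyclic group of order $3(p+3)$ of Theorem~\ref{thm:c23}; but that goes beyond what the corollary claims.)

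The substantive content is therefore concentrated entirely in Theorem~\ref{thm:c3c2tinfty}, the Zariski--van Kampen computation of $\pi_1(\mathbb{P}^2\setminus D)$, and this is the real obstacle: as already remarked, $\mathcal{C}_2$ and $\mathcal{T}_\infty$ meet only at non-real points, so the real picture in Figure~\ref{fig:c3c2tinfty} does not exhibit all the braid monodromy, and the two tangencies at $P$ together with the high-order contact $(\mathcal{C}_2\cdot\mathcal{C}_3)_Q=6$ must be treated with care. Once that computation is in hand, the reduction to $Z\setminus D'$ above is pure bookkeeping against Figure~\ref{fig:c23}: checking that all blow-up centers lie on $D$ and that the chosen $D'$ sits inside the total transform $\pi^{-1}(D)$.
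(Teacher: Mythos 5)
Your proposal is correct and is essentially identical to the paper's own proof: both identify the Milnor fibre with the complement in the blown-up surface of the non-gray components of Figure~\ref{fig:c23}, invoke Theorem~\ref{thm:c3c2tinfty} for $\pi_1(\mathbb{P}^2\setminus(\mathcal{C}_2\cup\mathcal{C}_3\cup\mathcal{T}_\infty))\cong\mathbb{Z}\oplus\mathbb{Z}$, and apply Proposition~\ref{prop:fujita} to exhibit $\pi_1(F)$ as a quotient of $\mathbb{Z}^2$. Your closing remark is also consonant with the paper, which likewise places the substantive work in the Zariski--van Kampen computation of Theorem~\ref{thm:c3c2tinfty}.
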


\begin{proof}
In Figure~\ref{fig:c23} we have depicted a (non-minimal) embedded resolution of the singularities
of $\mathcal{C}_2\cup\mathcal{C}_3\cup\mathcal{T}_\infty$. Let $\pi:X\to\mathbb{P}^2$ be that resolution.
Then $\mathbb{P}^2\setminus(\mathcal{C}_2\cup\mathcal{C}_3\cup\mathcal{T}_\infty)$
is isomorphic to $X\setminus\pi^{-1}(\mathcal{C}_2\cup\mathcal{C}_3\cup\mathcal{T}_\infty)$
and then its fundamental group is abelian.

We obtain the Milnor fibre $F$ of the $\qhd$-smoothing of $\mathcal{C}^2_3(p)$ as 
the complement in $X$ of all the irreducible components of $\pi^{-1}(\mathcal{C}_2\cup\mathcal{C}_3\cup\mathcal{T}_\infty)$
with the exception of the \emph{gray} components in Figure~\ref{fig:c23}. Then $\pi_1(F)$
is a quotient of $\mathbb{Z}^2$ by Proposition~\ref{prop:fujita} and the statement follows.
\end{proof}
In Figure~\ref{fig:c23} one can see the dual graph of a resolution of $\mathcal{C}_3\cup\mathcal{C}_2\cup\mathcal{T}_\infty$, with extra blow-ups at one of the points in $\mathcal{C}_2\cap\mathcal{T}_\infty$.
\vspace{3mm} 

Actually, Theorem~\ref{thm:c3c2tinfty} can be proved using \texttt{SIROCCO}~\cite{mbrr:16} inside \texttt{Sagemath}~\cite{sagemath}. A simple explanation
on how it works can be found in~\cite{marco}.
The code is very simple:
\begin{python}
R.<x,y,z>=QQ[]
F=(y^2*z-x^2*(x+z))*(y^2+(x+z)*(2*x+z))*(y-x)
C=Curve(F)
C.fundamental_group()
\end{python}

We include a computer-free proof of the Theorem. The strategy is to apply birational transformations to obtain an arrangement of curves
in $\C^2$ such that the complement of this arrangement is isomorphic
to $\mathbb{P}^2\setminus(\mathcal{C}_2\cup\mathcal{C}_3\cup\mathcal{T}_\infty\cup\mathcal{T}_0)$. This arrangement has real equations, and moreover
the real picture contains all the topological information. We can compute the fundamental group using the Zariski-van Kampen method applied to the vertical
projection. The first interesting property of the curve is that all the \emph{non-transversal} vertical lines are in the real picture. Not all the real
vertical lines intersect the arrangement of curves at real points, but the real part of the intersections can be tracked. As a consequence,
the real picture allows one to find the braid monodromy of the curve, and so the fundamental group can be computed. In order 
to obtain the fundamental group of $\mathbb{P}^2\setminus(\mathcal{C}_2\cup\mathcal{C}_3\cup\mathcal{T}_\infty)$
an extra step is needed. We can compute the meridian of $\mathcal{T}_0$ in terms on the given presentation; it is enough
to \emph{kill} this meridian.

\begin{proof}[Proof of Theorem{\rm~\ref{thm:c3c2tinfty}}]
Let us blow-up the nodal point $[0:0:1]$ of~$\mathcal{C}_3$. Let $E$ be the exceptional component of the resulting ruled surface~$\Sigma_1$, see Figure~\ref{fig:sigma1}. 

\begin{figure}[ht]
\centering
\begin{tikzpicture}
\draw (-2.5,0)--(.5,0);
\draw (0,-2.5)--(0,.5);
\draw (-2,-2.5)--(-2,.5);
\draw (-1,-1) ellipse [x radius=1.5cm,y radius=.5cm];
\draw (-2.5,.5) to[out=-45,in=150] (-2,0)[out=-30,in=180] to (-1,-.5) to[out=0,in=-150] (0,0) to[out=30,in=-120] (.5,.5);
\node[left] at (-2.5,-1) {$\mathcal{C}_2$};
\node[right] at (0,-2) {$\mathcal{T}_0$};
\node[right] at (.5,0) {$E$};
\node[left] at (-2,-2) {$\mathcal{T}_\infty$};
\node[left]at (-2.5,.5) {$\mathcal{C}_3$};
\end{tikzpicture}
\caption{Combinatorial picture in $\Sigma_1$ (the intersections of $\mathcal{C}_2$ with $\mathcal{T}_i$ are not real).}
\label{fig:sigma1}
\end{figure}
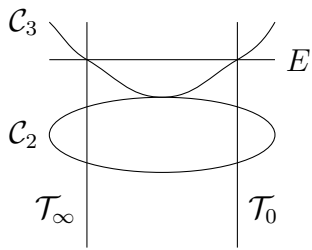

We continue with a couple of elementary transformations which yield $\Sigma_3$. We blow up $E\cap\mathcal{T}_i$ and contract the strict
transforms of $\mathcal{T}_\infty,\mathcal{T}_0$, keeping the exceptional components $\mathcal{F}_\infty,\mathcal{F}_0$. The complement 
of $\mathcal{C}_2\cup\mathcal{C}_3\cup\mathcal{T}_\infty\cup\mathcal{T}_0$ in $\mathbb{P}^2$ is isomorphic to the complement 
of $\mathcal{C}_2\cup\mathcal{C}_3\cup E\cup \mathcal{F}_\infty\cup \mathcal{F}_0$ in $\Sigma_3$. 
We can provide equations. If we blow-down the $(-3)$-section $E$ we obtain the weighted projective plane $\mathbb{P}^2_{\omega}$,
$\omega:=(1,1,3)$,
where the curves are defined by weighted homogeneous polynomials in the variables~$x_\omega,y_\omega,z_\omega$. 
In fact, the birational transformation is
\[
\begin{tikzcd}[row sep=0pt,/tikz/column 1/.append style={anchor=base east},/tikz/column 2/.append style={anchor=base west}]
\mathbb{P}^2\ar[r,dashed,"\psi"]&\mathbb{P}^2_\omega\\
{[x:y:z]}\ar[r,mapsto]&\left[ y-x  : x + y  : 8 (y^2 z -x^2(x+z))\right]_\omega.
\end{tikzcd}
\]
The curves $\mathcal{F}_\infty,\mathcal{F}_0$ have equations
$x_\omega=0$, $y_\omega=0$, respectively, while $\mathcal{C}_3$ has equation $z_\omega=0$.  By writing down the inverse of the map $\psi$, a long but straightforward calculation yields that the equation of $\mathcal{C}_2$ is
\begin{equation}\label{eq:c2}
z_\omega^2-2 (x_\omega^3+3 x_\omega^2 y_\omega-3 x_\omega y_\omega^2-y_\omega^3) z_\omega+(x_\omega+y_\omega)^6=0,
\end{equation}
with weighted degree~$6$. The intersection point of $\mathcal{C}_2$ and $\mathcal{C}_3$ is $[1:-1:0]_\omega$
and  $\mathcal{C}_2$ has a nodal point on $\mathcal{F}_0$ at
$[1:0:1]_\omega$; in particular, $\mathcal{C}_2\cup \mathcal{F}_0$ has an ordinary triple point there. The curve 
$\mathcal{C}_2$ has another double point  $[0:-1:1]_\omega$ (in $\mathcal{F}_\infty$).

Among the pencil of \emph{lines} through $[0:0:1]_\omega$, those with equations $x_\omega=0$, $y_\omega=0$ and $x_\omega+y_\omega=0$
intersect $\mathcal{C}_2\cup\mathcal{C}_3$ in two points.
A calculation shows that the other lines with this property are the tangent lines to $\mathcal{C}_2\cup\mathcal{C}_3$, with equations  $x_\omega+ 3y_\omega=0$ and $3 x_\omega+y_\omega=0$, and the tangency points
have quasi-homogeneous coordinates $[3:-1:-8]_\omega$ and  $[1:-3:-8]_\omega$.  These lines would be the vertical tangent lines to $\mathcal{C}_2$ in Figure~\ref{fig:sigma3}.

The local equation of $\mathcal{C}_2$ at the singular point  $[1:0:1]_\omega$ can be described in local coordinates $(u,v)\mapsto[1:u:v+1]_\omega$, and one finds the tangent cone is $0=21 u^2 - 6u v + v^2$.  These two lines are not real, so in the real picture one has an isolated point and can see $\mathcal{F}_0$ but not 
$\mathcal{C}_2$.

We can consider the
affine chart $\mathbb{C}^2\equiv\Sigma_3\setminus(E\cup\mathcal{F}_\infty)$, or equivalently, the affine chart
$(y_\omega,z_\omega)\mapsto[1:y_\omega:z_\omega]_\omega$ of of $\mathbb{P}^2_\omega$. Figure~\ref{fig:sigma3} shows a real picture of this affine chart 
$x_\omega=1$.

\begin{figure}[ht]
\centering
\begin{tikzpicture}[xscale=3]
\draw (-2,0)--(1,0);
\draw (0,-2.5)--(0,1);
\draw (-1,-1) ellipse [x radius=.5cm,y radius=1cm];
\draw[dotted] (-.5,-1) to[out=0,in=-90] (-.25,0)[out=90,in=180] to (0,.75);
\fill (0,.75) ellipse [x radius=.03333, y radius=.1];
\draw[dashdotted] (-.75,-2.5)--(-.75,1) node[above] {$\mathcal{F}_*$};
\draw[dashdotted] (-1.25,-2.5)--(-1.25,1) node[above] {$\mathcal{F}_-$};
\draw[dashdotted] (-.40,-2.5)--(-.40,1) node[above] {$\mathcal{F}_+^1$};
\draw[dashdotted] (-.15,-2.5)--(-.15,1) node[above] {$\mathcal{F}_+^2$};
\node[left] at (-1.5,-1) {$\mathcal{C}_2$};
\node[left] at (-2,0) {$\mathcal{C}_3$};
\node[right] at (0,-1) {$\mathcal{F}_0$};
\end{tikzpicture}
\caption{Affine chart $(y_\omega,z_\omega)$ of $\mathbb{P}^2_\omega$. The dotted line represents
real parts of the $y_\omega$-coordinates of the strict transform of the conic.}
\label{fig:sigma3}
\end{figure}
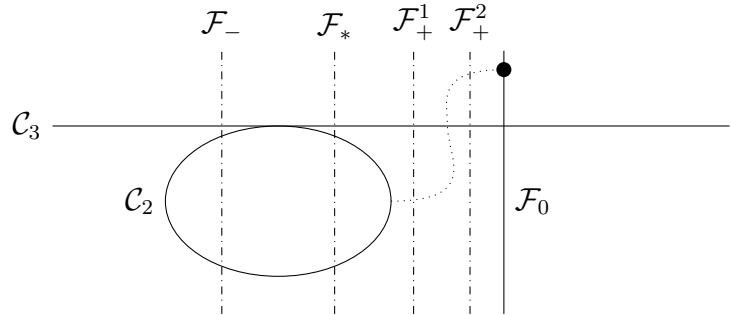

The base point for the fundamental group is in $\mathcal{F}_*=\{y_\omega=y_*\}$, with $z_\omega$-coordinate a real number $z_*\gg 1$. The geometric basis $c,q_1,q_2$ in this fibre,
plus a meridian of $\mathcal{F}_0$ lying on the horizontal line $z_\omega=z_*$, together generate the fundamental group of 
\[
\mathbb{P}^2\setminus(\mathcal{C}_2\cup\mathcal{C}_3\cup\mathcal{T}_\infty\cup\mathcal{T}_0)\cong
\Sigma_3\setminus(\mathcal{C}_2\cup\mathcal{C}_3\cup E\cup \mathcal{F}_\infty\cup \mathcal{F}_0)\cong
\mathbb{C}^2\setminus(\mathcal{C}_2\cup\mathcal{C}_3\cup \mathcal{F}_0).
\]
Let $y_-,y_+^1,y_+^2\in\mathbb{R}$ such that $\mathcal{F}_-=\{y_\omega=y_-\}$,
$\mathcal{F}_+^1=\{y_\omega=y_+^1\}$,
and $\mathcal{F}_+^2=\{y_\omega=y_+^2\}$. We consider geometric bases in these fibres which can be considered to have $(y_*,z_*)$
as base points. This is done if we take in the horizontal
line $z_\omega=z_*$ paths connecting $(y_*,z_*)$
with the base points in each vertical fiber, namely $(y_-,z_*)$, $(y_+^1,z_*)$, and $(y_+^2,z_*)$, see 
the upper part of Figure~\ref{fig:braids}.

\begin{figure}[ht]
\begin{tikzpicture}[scale=.5]
\tikzset{->-/.style={decoration={
  markings,
  mark=at position #1 with {\arrow[scale=1.5]{>}}},postaction={decorate}}}
\draw (-4,-2) rectangle (6,2);
\node[below left] at (6,2) {$y_\omega=y_*$};
\fill (5,0) circle [radius=.15] node[below=3pt] {$z_*$};
\fill (1,0) circle [radius=.15] node[below=5pt] {$c$};
\draw[->-=.76] (5,0) -- (1.5,0) arc [start angle=0,end angle=360, radius=.5];
\fill (-1,0) circle [radius=.15] node[below=5pt] {$q_1$};
\draw[->-=.83] (5,0) to[out=135, in=45] (-.5,0) arc [start angle=0,end angle=360, radius=.5];
\fill (-3,0) circle [radius=.15] node[below=5pt] {$q_2$};
\draw[->-=.865] (5,0) to[out=135, in=45] (-2.5,0) arc [start angle=0,end angle=360, radius=.5];
\begin{scope}[xshift=11cm]
\draw (-4,-2) rectangle (6,2);
\node[below left] at (6,2) {$y_\omega=y_-$};
\fill (5,0) circle [radius=.15] node[below=3pt] {$z_*$};
\fill (1,0) circle [radius=.15] node[below=5pt] {$c^{(q_1 c)^3}$};
\draw[->-=.76] (5,0) -- (1.5,0) arc [start angle=0,end angle=360, radius=.5];
\fill (-1,0) circle [radius=.15] node[below=5pt] {$q_1^{(q_1 c)^3}$};
\draw[->-=.83] (5,0) to[out=135, in=45] (-.5,0) arc [start angle=0,end angle=360, radius=.5];
\fill (-3,0) circle [radius=.15] node[below=5pt] {$q_2$};
\draw[->-=.865] (5,0) to[out=135, in=45] (-2.5,0) arc [start angle=0,end angle=360, radius=.5];
\end{scope}
\begin{scope}[yshift=-5cm]
\draw (-4,-1.75) rectangle (6,2.5);
\node[below left] at (6,2.5) {$y_\omega=y_+^1$};
\fill (5,0) circle [radius=.15] node[below=3pt] {$z_*$};
\fill (1,0) circle [radius=.15] node[below=5pt] {$c$};
\draw[->-=.76] (5,0) -- (1.5,0) arc [start angle=0,end angle=360, radius=.5];
\fill (-1,1) circle [radius=.15] node[left=15pt] {$q_1$};
\draw[->-=.83] (5,0) to (-.5,1) arc [start angle=0,end angle=360, radius=.5];
\fill (-1,-1) circle [radius=.15] node[left=15pt] {$q_2$};
\draw[->-=.865] (5,0) to[out=135, in=0] (-1,1.75) to[out=180,in=90] (-2.5,.5) to[out=-90,in=90] (-1,-.5) arc [start angle=90,end angle=450, radius=.5];
\end{scope}
\begin{scope}[yshift=-5cm,xshift=11cm]
\draw (-4,-1.75) rectangle (6,2.5);
\node[below left] at (6,2.5) {$y_\omega=y_+^2$};
\fill (5,0) circle [radius=.15] node[below=3pt] {$z_*$};
\fill (0,1) circle [radius=.15] node[below right=5pt] {$q_1$};
\draw[->-=.83] (5,0) to (.5,1) arc [start angle=0,end angle=360, radius=.5];
\fill (0,-1) circle [radius=.15] node[right=5pt] {$q_2^{q_1cq_1^{-1}}$};
\draw[->-=.865] (5,0) to[out=135, in=0] (0,1.6) to[out=180,in=90] (-1,.5) to[out=-90,in=90] (0,-.5) arc [start angle=90,end angle=450, radius=.5];
\fill (-2,0) circle [radius=.15] node[below=5pt] {$q_1cq_1^{-1}$};
\draw[->-=.76] (5,0) to[out=130,in=0] (0,1.75) to[out=180,in=90] (-2,.5) arc [start angle=90,end angle=450, radius=.5];
\end{scope}
\end{tikzpicture}
\caption{Geometric bases at the fibres; superindices stand for conjugation.}
\label{fig:gbf}
\end{figure}

The elements of the geometric bases in each vertical line (Figure~\ref{fig:gbf}) can be expressed in terms of the generators
in $\mathcal{F}_*$. The expression of each of this element in terms of the generators
$c,q_1,q_2$ in $\mathcal{F}_*$ is shown in Figure~\ref{fig:gbf}. These equalities are obtained 
by the action of the connecting braids in the lower part of Figure~\ref{fig:braids} which defined isomorphisms
of the fundamental group of the punctured line $\mathcal{F}_*$ with the fundamental group 
of the punctured lines $\mathcal{F}_-$, $\mathcal{F}_-$, $\mathcal{F}_+^1$ and $\mathcal{F}_+^2$. 
In order to draw the connecting braids, when two points have
the same real part, we put the one with positive imaginary
part to the left of the one with negative imaginary part.

\begin{figure}[ht]
\centering
\begin{tikzpicture}[scale=1]
\tikzset{->-/.style={decoration={
  markings,
  mark=at position #1 with {\arrow[scale=1.5]{>}}},postaction={decorate}}}
\draw (-4,-1.5) rectangle (6,1.5);
\node[below left] at (6,1.5) {$z_\omega=z_*$};

\draw[->-=.5] (0,0) to[out=150,in=30] node [above, pos=.5] {$A$} (-2,0) ;

\draw[->-=.5] (0,0) to[out=-30,in=-150] node [below, pos=.5] {$B$}  (2,0);

\draw[->-=.5] (0,0) to[out=-60, in=-120] node [above=2pt, pos=.5] {$C$}  (4,0);

\draw[->-=.8] (0,0) to[out=-65, in=-150] (5,-.5) arc [start angle=-90,end angle=270, radius=.5] node[below] {$f$};
\fill[fill=gray] (0,0) circle [radius=.15] node[above=3pt] {$y_*$};
\fill[fill=gray] (-2,0) circle [radius=.15] node[above=5pt] {$y_-$};
\fill[fill=gray] (4,0) circle [radius=.15] node[above=5pt] {$y_+^2$};
\fill[fill=gray] (2,0) circle [radius=.15] node[above=5pt] {$y_+^1$};
\fill[] (-1,0) circle [radius=.15] node[below=3pt] {$-1$};
\fill[] (1,0) circle [radius=.15] node[above=3pt] {$-\frac{1}{3}$};
\fill[] (-3,0) circle [radius=.15] node[above=3pt] {$-3$};
\fill[] (5,0) circle [radius=.15] node[above=15pt] {$0$};
\begin{scope}[yshift=-2cm, xshift=3.5cm]
\pic[braid/.cd, 
gap=.15
]
{braid = {s_1^{-1} s_2}};
\node at (1,-3) {$C$};
\end{scope}
\begin{scope}[yshift=-2cm, xshift=-.25cm]
\pic[braid/.cd, number of strands=3,
height = -2cm
]
{braid = {1}};
\node at (1,-3) {$B$};
\end{scope}
\begin{scope}[yshift=-2cm, xshift=-4cm]
\pic[braid/.cd, number of strands=3,
height = -.666cm, gap =.15pt
]
{braid = {s_2^{-1} s_2^{-1} s_2^{-1}}};
\node at (1,-3) {$A$};
\end{scope}
\end{tikzpicture}
\caption{Paths in $y_\omega=y_*$ avoiding the non transversal vertical
lines and associated braids}
\label{fig:braids}
\end{figure}

The fundamental group is generated by $c,q_1,q_2$ in $\mathcal{F}_*$
(Figure~\ref{fig:gbf}) and $f$ (Figure~\ref{fig:braids}).
The first relation is obtained by turning around $y_\omega=-\frac{1}{3}$
(vertical tangency)
and it is $q_1=q_2$. For the sake of brevity, we set
$q:=q_1=q_2$. 

Turning around $y_\omega=-1$, as the singular point is simple of type $A_{11}$,
then 
the relation is $[q,(q\cdot c)^6]=1$.

The next relation comes from turning around $y_\omega=-3$. This is a 
again a vertical ordinary tangency, and we obtain the equality
of the second and third meridians if $\mathcal{F}_-$, i.e., 
$q_2=(q_1\cdot c)^{-3}\cdot q_1\cdot (q_1\cdot c)^3$ which can be expressed as 
$[q,(q\cdot c)^3]=1$. The previous relation becomes a consequence of this one.

Since $\mathcal{F}_0$ is part of the curve, the relations obtained by turning around $y_\omega=0$ involve
also the generator $f$ together 
with the meridians in $\mathcal{F}_{+}^2$. We obtain:
\[
f^{-1}\cdot(q_1\cdot c\cdot q_1^{-1})\cdot f=(q_1\cdot c\cdot q_1^{-1}),\text{ i.e. }
[f,q\cdot c\cdot q^{-1}]=1,
\]
and 
\[
[f,q_1\cdot c^{-1}\cdot q_1^{-1}\cdot q_2\cdot q_1\cdot c\cdot q_1^{-1},q_1]=1\Longleftrightarrow
[f,q\cdot c^{-1}\cdot q\cdot c\cdot q^{-1},q]=1\Longleftrightarrow
[f,q,c^{-1}\cdot q\cdot c]=1.
\]
Recall that this relation means that $f\cdot q\cdot c^{-1}\cdot q\cdot c$
commute with the three factors.

We are interested in the fundamental group of $\mathbb{P}^2\setminus(\mathcal{C}_2\cup\mathcal{C}_3\cup\mathcal{T}_\infty)$.
Let $\hat{\Sigma}_3$ be the space obtained by blowing up; it turns out that the exceptional component is
the strict transform of $\mathcal{T}_0$ and that $\mathbb{P}^2\setminus(\mathcal{C}_2\cup\mathcal{C}_3\cup\mathcal{T}_\infty)$
is isomorphic to the complement of $\mathcal{C}_2\cup\mathcal{C}_3\cup E\cup \mathcal{F}_\infty\cup \mathcal{F}_0$ in $\hat{\Sigma}_3$.
Considering the meridian of $\mathcal{T}_0$, by Proposition~\ref{prop:fujita} this means that
$f\cdot q\cdot c^{-1}\cdot q\cdot c=1$. 
Summarizing the group is generated by $q,c,f$ with relations
\[
[q,(q\cdot c)^3]=1,\quad [f,q\cdot c\cdot q^{-1}]=1,\quad f\cdot q\cdot c^{-1}\cdot q\cdot c=1.
\]
The third relation allows one to solve for~$f$; inserting the value of~$f$ into the second relation, one deduces that $[q,cqc]=1$; 
then, applying this new relation to the first relation written out, one sees that $qc=cq$.  Thus, the group is abelian, isomorphic to~$\mathbb{Z}\oplus\mathbb{Z}$.
\end{proof}

\subsection{Fundamental group of the complement of the Milnor fiber of a smoothing of \texorpdfstring{$\mathcal{C}^3_3(p)$}{C33(p)}}\label{sec:c33}
\mbox{}

In this subsection we consider the curve
$\mathcal{C}_2\cup\mathcal{C}_3$, to be used for the construction
of the $\qhd$-Milnor fiber for the family $\mathcal{C}^3_3(p)$~\cite[(8.6)]{SSW}.
In Figure~\ref{fig:C33-infty} we have a resolution of the singularities of~$\mathcal{C}_2\cup\mathcal{C}_3$ with extra blowing-ups at one of the branches of the node. The $\qhd$-Milnor fiber for the family $\mathcal{C}^3_3(p)$ is obtained by
forgetting the last exceptional component (gray vertex).

\begin{figure}[ht]
\begin{center}
\begin{tikzpicture}[scale=1.5]
\foreach \a in {-3,...,3}
{
\coordinate (A\a) at (\a,0);
}
\foreach \b in {2,...,6}
{
\coordinate (B\b) at (\b,1);
}
\coordinate (C) at (-1,-1);
\draw[dashed] (A-3)--(A-1);
\draw (A-3)--(A-2);
\draw (A-1)--(A3);
\foreach \a in {-3,0,1,...,3}
{
\fill (A\a) circle [radius=.1];
}
\foreach \b in {2,...,6}
{
\fill (B\b) circle [radius=.1];
}
\draw (A2)--(B2)--(B6);
\node[above=2pt] at (A1) {$-(p+3)$};
\node[below=2pt] at (A2) {$-1$};
\node[below=2pt] at (A3) {$\mathcal{C}_2$};
\node[below right=2pt] at (A1) {$\mathcal{C}_3$};
\node[above=2pt] at (A0) {$E_0$};
\node[above=2pt] at (A-3) {$E_{p+1}$};
\draw[dashed] (A-3)--(C)--(A1);
\fill[gray] (C) circle [radius=.1];
\node[above] at (C) {$-1$};

\end{tikzpicture}
\caption{Graph at infinity for $\mathcal{C}^3_3(p)$.}
\label{fig:C33-infty}
\end{center}
\end{figure}
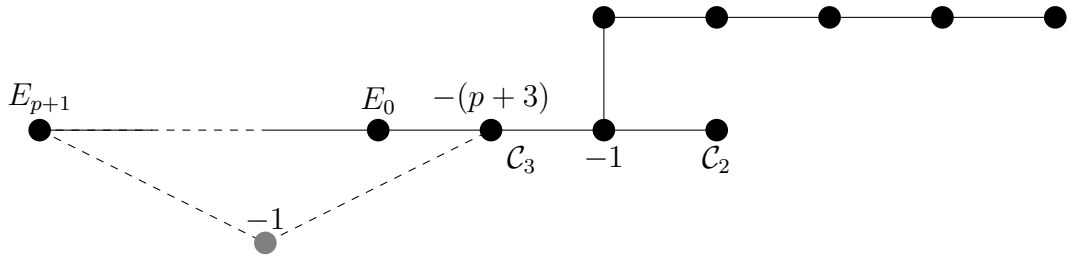

\begin{corollary}\label{cor:c33}
The Milnor fibre of the $\qhd$-smoothing of $\mathcal{C}^3_3(p)$ is abelian.
\end{corollary}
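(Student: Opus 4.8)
The plan is to run the argument of Corollary~\ref{cor:c23} essentially verbatim, so the only genuinely new ingredient I need is that $\pi_1(\mathbb{P}^2\setminus(\mathcal{C}_2\cup\mathcal{C}_3))$ is abelian. Once that is established, the $\qhd$-Milnor fibre $F$ of $\mathcal{C}^3_3(p)$ will be, by the Pinkham construction depicted in Figure~\ref{fig:C33-infty}, the complement in the embedded resolution $\pi\colon X\to\mathbb{P}^2$ of all the irreducible components of $\pi^{-1}(\mathcal{C}_2\cup\mathcal{C}_3)$ except the gray vertex, and Proposition~\ref{prop:fujita} will force $\pi_1(F)$ to be a quotient of $\pi_1(\mathbb{P}^2\setminus(\mathcal{C}_2\cup\mathcal{C}_3))$, hence abelian.

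First I would deduce abelianness of $\pi_1(\mathbb{P}^2\setminus(\mathcal{C}_2\cup\mathcal{C}_3))$ from Theorem~\ref{thm:c3c2tinfty}. Apply Proposition~\ref{prop:fujita} with $X=\mathbb{P}^2$, $D=\mathcal{C}_2\cup\mathcal{C}_3$ and $D'=D\cup\mathcal{T}_\infty$ (here $\mathcal{T}_\infty$ is an irreducible curve not contained in $D$): the inclusion $\mathbb{P}^2\setminus(\mathcal{C}_2\cup\mathcal{C}_3\cup\mathcal{T}_\infty)\hookrightarrow\mathbb{P}^2\setminus(\mathcal{C}_2\cup\mathcal{C}_3)$ induces a surjection $\mathbb{Z}\oplus\mathbb{Z}\cong\pi_1(\mathbb{P}^2\setminus(\mathcal{C}_2\cup\mathcal{C}_3\cup\mathcal{T}_\infty))\twoheadrightarrow\pi_1(\mathbb{P}^2\setminus(\mathcal{C}_2\cup\mathcal{C}_3))$. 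Thus $\pi_1(\mathbb{P}^2\setminus(\mathcal{C}_2\cup\mathcal{C}_3))$ is a quotient of $\mathbb{Z}^2$, in particular abelian. As an alternative (if one wishes to avoid citing Theorem~\ref{thm:c3c2tinfty}), the same group can be computed directly by Zariski--van Kampen, running the computation in the proof of Theorem~\ref{thm:c3c2tinfty} but dropping the generator and the relations attached to $\mathcal{T}_\infty$; the real picture of the conic-plus-nodal-cubic configuration again carries all the topological information, and the resulting presentation collapses to $\mathbb{Z}^2$.

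Then I would conclude exactly as in Corollary~\ref{cor:c23}. Because $\pi$ is an isomorphism away from the curve, $X\setminus\pi^{-1}(\mathcal{C}_2\cup\mathcal{C}_3)\cong\mathbb{P}^2\setminus(\mathcal{C}_2\cup\mathcal{C}_3)$ has abelian fundamental group. Taking $D'=\pi^{-1}(\mathcal{C}_2\cup\mathcal{C}_3)$ and $D$ the union of all its components except the gray one, Proposition~\ref{prop:fujita} yields a surjection $\pi_1(X\setminus D')\twoheadrightarrow\pi_1(X\setminus D)=\pi_1(F)$, so $\pi_1(F)$ is a quotient of an abelian group and the corollary follows. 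I expect essentially no obstacle here; the one point requiring care is the combinatorial bookkeeping that Figure~\ref{fig:C33-infty} really is an embedded resolution whose graph, after forgetting the gray vertex, realizes $\mathcal{C}^3_3(p)$ (as in \cite[(8.6)]{SSW}), and that $F$ is obtained by deleting precisely that component — matters that are purely combinatorial and do not affect the abelianness conclusion. If one instead takes the direct Zariski--van Kampen route, the only mildly delicate step is tracking the non-real intersection points of $\mathcal{C}_2$ with the vertical fibres, handled just as for the $\mathcal{C}^3_2$ case.
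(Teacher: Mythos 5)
Your proposal matches the paper's proof: the paper likewise first applies Proposition~\ref{prop:fujita} (together with Theorem~\ref{thm:c3c2tinfty}) to see that $\pi_1(\mathbb{P}^2\setminus(\mathcal{C}_2\cup\mathcal{C}_3))$ is abelian (in fact $\mathbb{Z}$), and then repeats the argument of Corollary~\ref{cor:c23} with the resolution of Figure~\ref{fig:C33-infty}, deleting all components except the gray one and invoking Proposition~\ref{prop:fujita} once more. Your argument is correct and essentially verbatim the paper's.
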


\begin{proof}
We work as in the proof of Corollary~\ref{cor:c23}.
The first step is to use Proposition~\ref{prop:fujita} to prove 
that the fundamental group of $\mathbb{P}^2\setminus(\mathcal{C}_2\cup\mathcal{C}_3)$
is abelian (in fact, isomorphic to $\mathbb{Z}$). Let $\pi:X\to\mathbb{P}^2$ be the (non-minimal) embedded resolution of the singularities
of $\mathcal{C}_2\cup\mathcal{C}_3$
depicted in Figure~\ref{fig:C33-infty}; we have that the fundamental group
of  $X\setminus\pi^{-1}(\mathcal{C}_2\cup\mathcal{C}_3)$
is abelian and we proceed as in the proof of Corollary~\ref{cor:c23}.
\end{proof}


\providecommand{\bysame}{\leavevmode\hbox to3em{\hrulefill}\thinspace}
\providecommand{\MR}{\relax\ifhmode\unskip\space\fi MR }
\providecommand{\MRhref}[2]{%
  \href{http://www.ams.org/mathscinet-getitem?mr=#1}{#2}
}
\providecommand{\href}[2]{#2}

\end{document}